\documentclass[11pt]{article}

\usepackage{oldlfont}
\usepackage{amsfonts}
\usepackage{graphicx}
\usepackage{eepic}

\oddsidemargin 0.5cm
\textwidth     16cm
\textheight    20cm

\newcommand{\be}{\begin{equation}}
\newcommand{\ee}{\end{equation}}
\newcommand{\bea}{\begin{eqnarray}}
\newcommand{\eea}{\end{eqnarray}}
\newcommand{\beas}{\begin{eqnarray*}}
\newcommand{\eeas}{\end{eqnarray*}}
\newcommand{\ba}{\begin{array}}
\newcommand{\ea}{\end{array}}

\newcommand{\<}  {\langle}
\renewcommand{\>}{\rangle}
\newcommand{\field}[1]{\mathbb{#1}}
\newcommand{\supp}{{\mathrm supp}}
\newcommand{\diam}{{\mathrm diam}}

\renewcommand{\div}{{\mathrm div}}

\newcommand{\R}{\mbox{\rm I\kern-.18em R}}
\newcommand{\G}{\Gamma}

\newcommand{\g}{\gamma}
\newcommand{\eps}{\varepsilon}

\newcommand{\bu}{{\mathbf u}}
\newcommand{\bv}{{\mathbf v}}

\newcommand{\bx}{{\mathbf x}}

\newcommand{\boldb}{{\mathbf b}}
\newcommand{\boldf}{{\mathbf f}}

\newcommand{\bX}{{\mathbf X}}
\newcommand{\bH}{{\mathbf H}}
\newcommand{\bL}{{\mathbf L}}
\newcommand{\bcurl}{{\mathbf curl}}
\newcommand{\bzero}{{\mathbf 0}}

\newcommand{\divg}{{\mathrm div}_{\G}}

\newcommand{\bcurlg}{{\mathbf curl}_{\G}}

\newcommand{\bxi}{\hbox{\mathversion{bold}$\xi$}}
\newcommand{\bPsi}{\hbox{\mathversion{bold}$\Psi$}}
\newcommand{\bpsi}{\hbox{\mathversion{bold}$\psi$}}
\newcommand{\bchi}{\hbox{\mathversion{bold}$\chi$}}

\newcommand{\bphi}{\hbox{\mathversion{bold}$\varphi$}}

\newcommand{\CM}{{\cal M}}

\newcommand{\CP}{{\cal P}}
\newcommand{\bCP}{\hbox{\mathversion{bold}$\cal P$}}

\newcommand{\CT}{{\cal T}}

\newtheorem{theorem}{Theorem} [section]
\newtheorem{lemma}{Lemma} [section]

\newtheorem{prop}{Proposition} [section]
\newtheorem{remark}{Remark} [section]
\newenvironment{proof}{\noindent\textbf{Proof.}\ }
              {\nopagebreak\hbox{ }\hfill$\Box$\bigskip}
\newcommand{\qed}{\nopagebreak\hbox{ }\hfill$\Box$\bigskip}

\title{Natural $hp$-BEM for the electric field integral equation with singular
       solutions
\thanks{Supported by EPSRC under grant no. EP/E058094/1.}}

\author{Alexei Bespalov
\thanks{School of Mathematics, University of Manchester,
        Manchester, M13 9PL, UK.
        Email: {\tt albespalov@yahoo.com}
        }
        \and
        Norbert Heuer
\thanks{Facultad de Matem\'aticas, Pontificia Universidad Cat\'olica de Chile,
        Avenida Vicu\~na Mackenna 4860, Santiago, Chile.
        Email: {\tt nheuer@mat.puc.cl}}
        }
\begin{document}
\date{}
\maketitle

\begin{abstract}
We apply the $hp$-version of the boundary element
method (BEM) for the numerical solution of the electric field
integral equation (EFIE) on a Lipschitz polyhedral surface $\G$.
The underlying meshes are supposed to be quasi-uniform triangulations of $\G$,
and the approximations are based on either Raviart-Thomas or Brezzi-Douglas-Marini
families of surface elements.
Non-smoothness of $\Gamma$ leads to singularities in the solution
of the EFIE, severely affecting convergence rates of the BEM.
However, the singular behaviour of the solution can be explicitly
specified using a finite set of power functions (vertex-, edge-, and
vertex-edge singularities). In this paper we use this fact to
perform an a priori error analysis of the $hp$-BEM on quasi-uniform meshes.
We prove precise error estimates in terms of the polynomial degree $p$,
the mesh size $h$, and the singularity exponents.

\bigskip
\noindent
{\em Key words}: $hp$-version with quasi-uniform meshes, boundary element method,
                 electric field integral equation, singularities,
                 a priori error estimate

\noindent
{\em AMS Subject Classification}: 65N38, 65N15, 78M15, 41A10
\end{abstract}

\section{Introduction} \label{sec_intro}
\setcounter{equation}{0}

In this paper we study numerical approximations of the electric field integral equation (EFIE)
on a surface $\Gamma = \partial\Omega$, where $\Omega \subset {\field{R}}^3$ is
a Lipschitz polyhedron. The EFIE is a boundary integral formulation of a
boundary value problem for the time-harmonic Maxwell equations
in the domain exterior to $\Omega$.
It models the scattering of electromagnetic waves at a perfectly conducting body.

For the numerical solution of the EFIE we use
the Galerkin boundary element method (BEM).
It employs $\bH(\divg)$-conforming families of surface elements,
namely Raviart-Thomas (RT) and Brezzi-Douglas-Marini (BDM) elements, to discretise
the variational formulation of the EFIE (called Rumsey's principle).
This approach is referred to as the natural BEM for the EFIE.
As in finite element methods, the convergence may be achieved either
by keeping polynomial degrees fixed and refining the mesh ($h$-version),
or by fixing the mesh and increasing polynomial degrees ($p$-version),
or by simultaneous $h$-refinement and $p$-enrichment ($hp$-version).

The Galerkin BEM is widely used in the engineering practice for the
simulation of electromagnetic scattering. Moreover, it had been used
long before a rigorous theoretical analysis of the method became available.
Error analyses of different boundary element schemes for the eddy current problem on smooth obstacles are given by MacCamy and Stephan in \cite{McCamyS_83_BET,McCamyS_84_SPT}.
Despite these early results the BEM-analysis for non-smooth obstacles started much later.
In fact, the convergence and a priori error analysis of the $h$-BEM for the EFIE
on piecewise smooth (open or closed) surfaces
has been developed within the last decade
(see \cite{BuffaCS_02_BEM,HiptmairS_02_NBE,BuffaHvPS_03_BEM,BuffaC_03_EFI,BuffaH_03_GBE}),
and the corresponding results for high-order methods ($p$- and $hp$-BEM)
are very recent (see \cite{BespalovH_NpB,BespalovH_10_hpA,BespalovHH_Chp}).
In particular, \cite{BespalovH_NpB} and \cite{BespalovHH_Chp} establish quasi-optimal convergence
of the natural $hp$-BEM on meshes of shape-regular elements.
An essential ingredient for the proofs in these papers are the projection-based
interpolation operators developed by Demkowicz and co-authors
\cite{DemkowiczB_03_pIE,DemkowiczB_05_HHH,Demkowicz_08_PES}.
These operators also facilitate a priori error analysis, where one
needs $hp$-approximation theory in specific trace spaces.
In \cite{BespalovH_10_hpA} we prove an a priori error estimate
of the $hp$-BEM on quasi-uniform meshes of affine elements under the assumption
that the regularity of the solution to the EFIE is given in Sobolev
spaces on $\G$. The latter result states that the 
method converges with the same rate $r + \frac 12$ in both $h$ and $p^{-1}$ (here,
$r$ denotes the Sobolev regularity order, and $p$ is assumed to be large enough).
However, it is evident from the numerical results reported in \cite{Leydecker_Thesis} that
the $p$-BEM converges faster than the $h$-BEM for the EFIE.
This is similar to what was observed and proved for the $p$-BEM and the $h$-BEM
applied to elliptic problems in three-dimensions (cf.
\cite{SchwabS_96_OpA,HeuerMS_99_ECB,BespalovH_08_hpB,BespalovH_10_hpW}).

With this paper we fill a gap in the theory of the BEM for the EFIE on polyhedral
surfaces by proving a precise error estimate for the $hp$-BEM on quasi-uniform meshes.
Similarly to the elliptic case we make use of explicit expressions for singularities in
electromagnetics fields (these expressions are available from
\cite{CostabelD_00_SEF,BespalovH_NpB}). The established error
estimate shows that the convergence
rates in $h$ and $p$ depend on the strongest singularity exponent, and that the $p$-BEM
converges twice as fast as the $h$-BEM on quasi-uniform meshes.
This extends the results of \cite{BespalovH_NpB}, where the
analysis was restricted to the $p$-BEM on a plane open surface.

It is now well known that appropriate decompositions of vector fields
(on both the continuous and discrete level) are
critical for the analysis of electromagnetic problems and their approximations.
In the case of the EFIE the main idea is to isolate the kernel of the $\divg$-operator
such that the complementary field possesses an enhanced smoothness
(cf. \cite{BuffaC_03_EFI,Buffa_05_RDS,BespalovHH_Chp}).
The corresponding decompositions of singular vector fields greatly facilitate
our analysis in this paper as well. In particular, the complementary vector fields are singular
vector functions belonging to $\bH^{1/2}(\G)$. Then, in the case of the BDM-based BEM,
these vector functions can be approximated component-wise by continuous piecewise polynomials,
and the desired $hp$-error estimates are derived by using the corresponding
results in \cite{BespalovH_08_hpB} for scalar singularities (belonging to $H^{1/2}(\G)$).
However, this simple approach does not work for the RT-based BEM because
the dimension of the underlying RT-space on the reference element is smaller
than the dimension of the BDM-counterpart. That is why, the results
of \cite{BespalovH_08_hpB} cannot be applied directly, and additional technical
arguments are needed (see the proof of Lemma~\ref{lm_approx_v}
in Section~\ref{sec_proof_v}).

The rest of the paper is organised as follows.
In the next section we formulate the EFIE in its variational form
and recall the typical structure of the solution to this model problem.
In Section~\ref{sec_BEM} we introduce the $hp$-version of the BEM
for the EFIE and formulate the main result of the paper (Theorem~\ref{thm_main_hp})
stating convergence rates of the method. This result follows from
the general approximation theorem (Theorem~\ref{thm_gen_approx}) established
in Section~\ref{sec_gen_approx}. The proof of Theorem~\ref{thm_gen_approx}
relies, in particular, on two technical lemmas which are proved in
Section~\ref{sec_proofs}.

Throughout the paper, $C$ denotes a generic positive constant
independent of $h$ and $p$.

\section{Formulation of the problem} \label{sec_problem}
\setcounter{equation}{0}

Let $\G$ be a Lipschitz polyhedral surface in ${\field{R}}^3$.
Throughout the paper we will use exactly the same
notation as in \cite{BespalovH_10_hpA} for all involved differential and
boundary integral operators as well as for Sobolev spaces of scalar functions
and tangential vector fields on $\G$ (all essential definitions are given
in \cite[Section~3.1]{BespalovH_10_hpA}). In particular, we use boldface symbols
for vector fields, and the spaces (or sets) of vector fields are denoted in
boldface as well.

For a given wave number $\kappa > 0$, we denote by $\Psi_\kappa$ (resp., $\bPsi_\kappa$)
the scalar (resp., the vectorial) single layer boundary integral operator
on $\G$ for the Helmholtz operator $-\,\Delta - \kappa^2$
(see \cite[Section~5]{BuffaH_03_GBE}).
The variational formulation for the EFIE will be posed in the following
Hilbert space of tangential vector fields on $\G$:
\[
  \bX = \bH^{-1/2}(\divg,\G) :=
        \{\bu \in \bH^{-1/2}_{\|}(\G);\; \divg\,\bu \in H^{-1/2}(\G)\},
\]
which is the trace space of $\bH(\bcurl,\Omega)$, where $\Omega \subset {\field{R}}^3$
is a Lipschitz polyhedron such that $\G = \partial\Omega$
(we refer to \cite{BuffaC_01_TFI,BuffaC_01_TII,BuffaCS_02_THL}
for the definition and properties of this and other trace spaces on $\G$).

Let $\bX'$ be the dual space of $\bX$ (with duality pairing
extending the $\bL^2(\G)$-inner product for tangential vector fields).
Then, for a given source functional $\boldf\in \bX'$
the variational formulation of the EFIE reads as:
{\em find a complex tangential field $\bu\in\bX$ such that}
\be \label{bie_var}
    a(\bu,\bv)
    :=
    \<\Psi_\kappa \divg\,\bu, \divg\,\bv\> - \kappa^2 \<\bPsi_\kappa\bu, \bv\>
    = \<\boldf, \bv\> \quad\forall \bv \in \bX.
\ee
Here, the brackets $\<\cdot,\cdot\>$ denote
dualities associated with $H^{1/2}(\G)$ and $\bH^{1/2}_{\|}(\G)$.
To ensure the uniqueness of the solution to (\ref{bie_var}) we always assume
that $\kappa^2$ is not an electrical eigenvalue of the interior problem.

Let us recall the typical structure of the solution $\bu$ to problem (\ref{bie_var}),
provided that the source functional $\boldf$ is sufficiently smooth
(we note that this regularity assumption is satisfied for the electromagnetic
scattering with plane incident wave). We use the results of
\cite[Appendix~A]{BespalovH_NpB}. These results were derived from the regularity
theory in \cite{CostabelD_00_SEF} for the boundary value problems
for Maxwell's equations in 3D by making use of trace arguments and some
technical calculations in the spirit of \cite{vonPetersdorffS_90_DEC} and \cite{vP}.

Let $V = \{v\}$ and $E = \{e\}$ denote the sets of vertices and edges of $\G$, respectively.
For $v\in V$, let $E(v)$ denote the set of edges with $v$ as an end point.
Then the solution $\bu$ of (\ref{bie_var}) can be written as
\be \label{dec}
    \bu = \bu_{\rm reg} + \bu_{\rm sing},
\ee
where
\be \label{reg}
    \bu_{\rm reg} \in
    \bX^k :=
    \{\bu \in \bH^k_{\;-}(\G);\; \divg\,\bu \in H^k_{\;-}(\G)\}\ \ \hbox{with \ $k>0$}
\ee
(the spaces $\bH^k_{\;-}(\G)$ and $H^k_{\;-}(\G)$ are
defined in a piecewise fashion by localisation to each face of $\G$,
and the space $\bX^k$ is equipped with its graph norm $\|\cdot\|_{\bX^k}$,
see \cite{BespalovH_10_hpA}),
\be \label{sing}
    \bu_{\rm sing} = \sum_{e\in E} \bu^e + \sum_{v\in V} \bu^v
                     + \sum_{v\in V}\sum_{e\in E(v)} \bu^{ev},
\ee
and $\bu^e$, $\bu^v$, and $\bu^{ev}$ are the edge, vertex, and
edge-vertex singularities, respectively.

In order to write explicit expressions for the above singularities,
let us fix a vertex $v \in V$ and an edge $e \in E(v)$.
Then, on each face $\G^{ev} \subset \G$ such that $e \subset \partial\G^{ev}$
we will use local polar and Cartesian coordinate systems $(r_v,\theta_v)$
and $(x_{e1},x_{e2})$, both with the origin at $v$, such that
$e = \{(x_{e1},x_{e2});\; x_{e2} = 0,\ x_{e1} > 0\}$ and
for a sufficiently small neighbourhood $B_\tau$ of $v$ there holds
$\G^{ev} \cap B_\tau \subset \{(r_v,\theta_v);\; 0 < \theta_v < \omega_v\}$.
Here, $\omega_v$ denotes the interior angle (on $\G^{ev}$)
between the edges meeting at $v$.
For simplicity of notation we write out here only the leading singularities
in $\bu^e$, $\bu^v$, and $\bu^{ev}$ on the face $\G^{ev}$,
thus omitting the corresponding terms of higher regularity
(see \cite[Appendix~A]{BespalovH_NpB} for complete expansions).

For the edge singularities $\bu^{e}$ one has
\be \label{ue}
    \bu^e =
            \bcurl_{\G^{ev}}\Big(x_{e2}^{\g_1^e}\,|\log x_{e2}|^{s_1^e}\,b_{1}^e(x_{e1})\,
            \chi_1^e(x_{e1})\,\chi_2^e(x_{e2})\Big)
            +\,
            x_{e2}^{\g_2^e}\,|\log x_{e2}|^{s_2^e}\,\boldb_{2}^e(x_{e1})\,
            \chi_1^e(x_{e1})\,\chi_2^e(x_{e2}),
\ee
where $\bcurl_{\G^{ev}} = (\partial/\partial x_{e2},\, -\partial/\partial x_{e1})$
is the tangential vector curl operator $\bcurlg$ restricted to the face $\G^{ev}$
(cf. \cite{BuffaC_01_TFI,BuffaC_01_TII}),
$\gamma_{1}^e,\;\gamma_{2}^e > \frac 12$, and $s_1^e,\,s_2^e\ge 0$ are
integers. Here, $\chi_1^e$, $\chi_2^e$ are $C^\infty$ cut-off functions
with $\chi_1^e=1$ in a certain distance to the end points of $e$
and $\chi_1^e=0$ in a neighbourhood of these vertices.
Moreover, $\chi_2^e=1$ for $0\le x_{e2}\le\delta_e$
and $\chi_2^e=0$ for $x_{e2}\ge 2\delta_e$ with some $\delta_e \in (0,\frac 12)$.
The functions $b_{1}^e\chi_1^e \in H^{m_1}(e)$ and $\boldb_{2}^e\chi_1^e \in \bH^{m_2}(e)$
for $m_1$ and $m_2$ as large as required.

The vertex singularities $\bu^v$ have the form
\be \label{uv}
    \bu^v =
    \bcurl_{\G^{ev}}\Big(r_v^{\lambda_1^{v}}\, |\log r_v|^{q_1^v}\,
    \chi^v(r_v)\, \chi^v_{1}(\theta_v)\Big)
    +\,r_v^{\lambda_2^{v}}\, |\log r_v|^{q_2^v}\,
    \chi^v(r_v)\, \bchi^v_{2}(\theta_v),
\ee
where $\lambda_1^v,\;\lambda_2^v > - \frac 12$ are real numbers,
$q_1^v,\,q_2^v \ge 0$ are integers,
$\chi^v$ is a $C^\infty$ cut-off function with
$\chi^v=1$ for $0\le r_v\le\tau_v$ and $\chi^v=0$ for $r_v\ge 2\tau_v$
with some $\tau_v \in (0,\frac 12)$.
The functions $\chi_1^v,\,\bchi_2^v$ are such that
$\chi_1^v \in H^{t_1}(0,\omega_v)$,
$\bchi_2^v \in \bH^{t_2}(0,\omega_v)$
for $t_1$, $t_2$ as large as required.

For the combined edge-vertex singularity $\bu^{ev}$ one has
\[
    \bu^{ev} = \bu_1^{ev} + \bu_2^{ev},
\]
where
\bea \label{u1ev}
     \bu_1^{ev}
     & = &
     \bcurl_{\G^{ev}}\Big(
     x_{e1}^{\lambda_1^v-\gamma_1^e} x_{e2}^{\gamma_1^e}
     |\log x_{e1}|^{\beta_1}|\log x_{e2}|^{\beta_2}\,
     \chi^v(r_v) \chi^{ev}(\theta_v)
     \Big)
     \cr\cr
     & &
     \qquad\qquad +\,
     x_{e1}^{\lambda_2^v-\gamma_2^e} x_{e2}^{\gamma_2^e}
     |\log x_{e1}|^{\beta_3}|\log x_{e2}|^{\beta_4}\,
     \chi^v(r_v) \chi^{ev}(\theta_v)
     \left(
     \ba{c}
     0 \\
     1
     \ea
     \right)
\eea
and
\be \label{u2ev}
    \bu_2^{ev} =
    \bcurl_{\G^{ev}}\Big(x_{e2}^{\g_1^e}\,|\log x_{e2}|^{s_1^e}\,b_{3}^e(x_{e1},x_{e2})\,
    \chi_2^e(x_{e2})\Big)
    +\,
    x_{e2}^{\g_2^e}\,|\log x_{e2}|^{s_2^e}\,\boldb_{4}^e(x_{e1},x_{e2})\,
    \chi_2^e(x_{e2}).
\ee
Here, $\lambda_i^v$, $\gamma_i^e$, $s_i^e$ ($i=1,2$), $\chi^v$, and $\chi_2^{e}$
are as above, $\beta_k \ge 0$ ($k = 1\ldots,4$) are integers,
$\beta_1 +\beta_2 = s_1^e +q_1^v$, $\beta_3 +\beta_4 = s_2^e +q_2^v$
with $q_1^v$, $q_2^v$ being as in (\ref{uv}),
$\chi^{ev}$ is a $C^\infty$ cut-off function with
$\chi^{ev}=1$ for $0\le\theta_v\le\beta_v$ and $\chi^{ev}=0$ for
$\frac 32\beta_v\le\theta_v\le\omega_v$ for some
$\beta_v \in (0,\min\{\omega_v/2,\pi/8\}]$.
The functions $b_{3}^e$ and $\boldb_{4}^e$, when extended by zero onto
$\R^{2+}:=\{(x_{e1},x_{e2});\; x_{e2}>0\}$, lie in $H^{m_1}(\R^{2+})$ and
$\bH^{m_2}(\R^{2+})$, respectively, with $m_1$, $m_2$ as large as required.
Finally, the supports of $\bu_1^{ev}$ and $\bu_2^{ev}$ are subsets of the sector
$\bar S_{ev} = \{(r_v,\theta_v);\;
                 0 \le r_v \le 2\tau_v,\, 0 \le \theta_v \le \frac 32 \beta_v\}$.

\begin{remark} \label{rem_sing1}
{\rm (i)}
The exponents $\g_i^e$ ($i=1,2$) of the edge and vertex-edge singularities
in {\rm (\ref{ue})}, {\rm (\ref{u1ev})}, {\rm (\ref{u2ev})} satisfy 
$\gamma_{i}^e > \frac 12$. However, for our approximation analysis below
it suffices to require that $\gamma_{i}^e > 0$ ($i=1,2$).
Note that $\gamma_{i}^e > 0$ and $\lambda_{i}^v > -\frac 12$ ($i=1,2$)
are the minimum requirements to guarantee $\bu \in \bX$.

{\rm (ii)}
As mentioned above, the terms of higher regularity (i.e., with greater
singularity exponents) are omitted in {\rm (\ref{ue})--(\ref{u2ev})}.
These terms are necessary
to obtain the regular part $\bu_{\rm reg} \in \bX^k$ of decomposition {\rm (\ref{dec})}
as smooth as required. This can be done by considering sufficiently many
(omitted) singularity terms of each type.
\end{remark}

\begin{remark} \label{rem_sing2}
{\rm (i)}
By {\rm (\ref{ue})--(\ref{u2ev})} we conclude that any singular vector
field $\bu^s$ in {\rm (\ref{sing})} ($s = e,\, v$, or $ev$) can be written as
\be \label{us}
    \bu^s = \bcurlg\, w^s + \bv^s = \bcurlg\, w^s + (v^s_1,\,v^s_2)
\ee
with corresponding (scalar) singular functions $w^s,\,v^s_1,\,v^s_2$
being defined on the whole surface $\G$. Note that these
scalar functions are $H^{1/2}$-regular on $\G$, i.e.,
\[
  w^s \in H^{1/2}(\G),\qquad \bv^s = (v^s_1,\,v^s_2) \in \bH^{1/2}(\G),\qquad
  s = e,\, v,\, ev.
\]

{\rm (ii)}
It is important to observe that the functions $w^s,\,v_1^s,\,v_2^s$
($s = e,\, v$, or $ev$) in {\rm (\ref{us})} are typical scalar singularities
inherent to solutions of the boundary integral equations
with hypersingular operator for the Laplacian on $\G$
and with possibly singular right-hand side.
Continuous piecewise polynomial approximations of these scalar singularities
in fractional-order Sobolev spaces were analysed
in~{\rm \cite{BespalovH_08_hpB,Bespalov_09_NPA}}
and will be used to prove the main result of the present paper.
\end{remark}

\section{The $hp$-version of the BEM and the main result} \label{sec_BEM}
\setcounter{equation}{0}

For the approximate solution of (\ref{bie_var}) we apply
the $hp$-version of the BEM on quasi-uniform triangulations of $\G$.
Our BEM is based on Galerkin discretisations with an appropriate
family of $\bH(\divg,\G)$-conforming surface elements
(here, $\bH(\divg,\G) := \{\bu \in \bL^{2}(\G);\; \divg\,\bu \in L^{2}(\G)\}$).
In what follows, $h > 0$ and $p \ge 1$ will always specify the mesh parameter and
a polynomial degree, respectively.
For any $\Omega \subset {\field{R}}^n$ we will denote
$\rho_{\Omega} = \sup \{\diam (B);\; \hbox{$B$ is a ball in $\Omega$}\}$.

Let $\CT = \{\Delta_h\}$ be a family of meshes
$\Delta_h = \{\G_j;\; j=1,\ldots,J\}$. Each mesh is a partition of $\G$
into triangular elements $\G_j$ such that $\bar\G = \cup_{j=1}^{J} \bar\G_j$,
and the intersection of any two elements $\bar\G_j,\,\bar\G_k$ ($j \not= k$)
is either a common vertex, an entire side, or empty.

In the following we always identify a face of the polyhedron $\G$
with a subdomain of ${\field{R}}^2$.
We denote $h_j=\diam (\G_j)$ for any $\G_j\in \Delta_h$.
Furthermore, any element $\G_j$ is the image
of the reference triangle
$K = \{(\xi_1,\xi_2);\; 0 < \xi_1 <1,\ 0 < \xi_2 < 1 - \xi_1\}$
under an affine mapping $T_j$, more precisely
\[
  \bar\G_j = T_j(\bar K),\quad
  \bx = T_j(\bxi) =  B_j\,\bxi + \boldb_j,
\]
where $B_j \in {\field{R}}^{2\times 2}$,
$\boldb_j \in {\field{R}}^2$, $\bx = (x_1,x_2) \in \bar\G_j$, and
$\bxi = (\xi_1,\xi_2) \in \bar K$.
Then, the Jacobian matrix of $T_j$ is $B_j \in {\field{R}}^{2\times 2}$,
and its determinant $J_j := \hbox{det}(B_j)$
satisfies the relation $|J_j| \simeq h_j^2$.

We consider a family $\CT$ of quasi-uniform shape-regular meshes $\Delta_h$
on $\G$ in the sense that there exist positive constants
$\sigma_1,\,\sigma_2$ independent of $h = \max\limits_j h_j$ such
that for any $\G_j\in\Delta_h$ and arbitrary $\Delta_h\in \CT$ there holds
\be \label{mesh}
    h_j \le \sigma_1\,\rho_{\G_j},\qquad
    h \le \sigma_2\,h_j.
\ee
Whereas the mapping $T_j$ introduced above is used to associate
scalar functions defined on the real element $\G_j$ and on the reference
triangle $K$, the Piola transformation is used to transform
vector-valued functions between $K$ and $\G_j$:
\be \label{Piola}
    \bv = \CM_j(\hat \bv) = \hbox{$\frac {1}{J_j}$} B_j \hat\bv \circ T_j^{-1},
    \quad
    \hat\bv = \CM_j^{-1}(\bv) = J_j B_j^{-1} \bv \circ T_j.
\ee

Let us introduce the needed polynomial sets.
By $\CP_p(K)$ we denote the set of polynomials of total degree $\le p$
on the reference triangle $K$.
We will use two families of $\bH(\divg,\G)$-conforming surface elements:
the Raviart-Thomas (RT) and Brezzi-Douglas-Marini (BDM) elements.
The corresponding spaces of degree $p \ge 1$ on the reference triangle $K$
will be denoted as follows (see, e.g., \cite{BrezziF_91_MHF, RobertsT_91_MHM}):
\beas
     \bCP^{\rm RT}_p(K) & = &
     (\CP_{p-1}(K))^2 \oplus \bxi \CP_{p-1}(K);
     \\[3pt]
     \bCP^{\rm BDM}_p(K) & = &
     (\CP_{p}(K))^2.
\eeas
We will use the unified notation $\bCP_p(K)$ which refers to either
the RT- or BDM-space on $K$ for $p \ge 1$.
Accordingly, all results in this paper
are formulated in a unified way and are valid for both
the RT- and BDM-based boundary element spaces defined on the triangulation of $\G$.
Note, however, that in some cases we will need to provide arguments separately for
each type of these boundary elements.

Using transformations (\ref{Piola}), we set
\be \label{Xp}
    \bX_{hp} := \{\bv \in \bH(\divg,\G);\;
                      \CM_j^{-1}(\bv|_{\G_j}) \in \bCP_p(K),\ j=1,\ldots,J\}.
\ee
Note that only one type of surface elements (i.e., either the RT- or BDM-elements)
is used in (\ref{Xp}) for all triangles $\G_j$.
We will denote by $N = N(h,p)$ the dimension of the discrete space $\bX_{hp}$.
One has $N \simeq h^{-2}$ for fixed $p$ and $N \simeq p^2$ for fixed $h$.

The $hp$-version of the Galerkin BEM for the EFIE reads as:
{\em Find $\bu_{hp}\in \bX_{hp}$ such that}
\be \label{BEM}
    a(\bu_{hp},\bv) = \<\boldf, \bv\> \quad \forall \bv \in \bX_{hp}.
\ee

Due to the infinite-dimensional kernel of the $\divg$-operator,
the bilinear form $a(\cdot,\cdot)$ in (\ref{bie_var}) is not $\bX$-coercive,
and, hence, the unique solvability of (\ref{BEM}) cannot be proved by standard
arguments. However, the refined analysis in \cite{BespalovHH_Chp}
shows that the unique BEM-solution $\bu_{hp} \in \bX_{hp}$ does exist,
and it converges quasi-optimally to the exact solution $\bu \in \bX$ of the EFIE
as $N(h,p) \to \infty$. This result is formulated in the following proposition.

\begin{prop} \label{prop_solve}
{\rm \cite[Theorem~1.2]{BespalovHH_Chp}}
There exists $N_0 \ge 1$ such that for any $\boldf \in \bX'$ and for arbitrary
mesh-degree combination satisfying $N(h,p) \ge N_0$ the discrete problem {\rm (\ref{BEM})}
is uniquely solvable and the $hp$-version of the Galerkin BEM
converges quasi-optimally, i.e.,
\be \label{quasi-optimality}
    \|\bu - \bu_{hp}\|_{\bX} \le
    C \inf\{\|\bu - \bv\|_{\bX};\; \bv\in \bX_{hp}\}.
\ee
Here, $\bu \in \bX$ is the solution of {\rm (\ref{bie_var})},
$\bu_{hp} \in \bX_{hp}$ is the solution of {\rm (\ref{BEM})},
$\|\cdot\|_{\bX}$ denotes the norm in $\bX$, and
$C>0$ is a constant independent of $h$ and $p$.
\end{prop}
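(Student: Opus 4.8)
The "final statement" here is Proposition~\ref{prop_solve}, which is quoted from \cite[Theorem~1.2]{BespalovHH_Chp}. Since the paper merely cites it, I will sketch how one proves such a quasi-optimality result for the natural Galerkin BEM applied to the EFIE, in the spirit of the references available.

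\bigskip

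The plan is to establish quasi-optimality via a discrete inf-sup (or, equivalently, a discrete Babu\v{s}ka--Brezzi-type) argument, exploiting the Helmholtz-type splitting of $\bX$ that isolates the kernel of $\divg$. First I would recall that the bilinear form $a(\cdot,\cdot)$ in (\ref{bie_var}) satisfies a G\r{a}rding inequality on $\bX$: writing any $\bv\in\bX$ via a regular splitting $\bv = \bcurlg\,\phi + \bv_0$ with $\phi\in H^{1/2}(\G)$ and $\bv_0$ in a complementary subspace enjoying extra regularity (this is the decomposition referenced after Proposition~\ref{prop_solve} in the introduction, from \cite{BuffaC_03_EFI,Buffa_05_RDS,BespalovHH_Chp}), one sees that $\Psi_\kappa\divg$ is coercive on the $\divg$-lifted part while $\kappa^2\bPsi_\kappa$ is a compact perturbation. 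Thus $a(\bv,\bv) \ge c\,\|\bv\|_{\bX}^2 - \langle K\bv,\bv\rangle$ with $K$ compact on $\bX$.

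The crucial second step is the \emph{discrete} compatibility: the splitting $\bv=\bcurlg\,\phi+\bv_0$ must have a discrete counterpart on $\bX_{hp}$, i.e. every $\bv_{hp}\in\bX_{hp}$ must decompose (stably, uniformly in $h$ and $p$) as a discrete surface-curl of a continuous piecewise polynomial plus a discrete remainder with a uniform norm bound in a stronger space. For RT/BDM elements this is exactly the commuting-diagram property linking the scalar $hp$ spaces, the $\bH(\divg)$ spaces, and the $L^2$ spaces through the projection-based interpolation operators of Demkowicz and co-authors \cite{DemkowiczB_03_pIE,DemkowiczB_05_HHH,Demkowicz_08_PES}; these operators commute with $\divg$ and are uniformly bounded, which is precisely why they were singled out in the introduction. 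Granting this, one transfers the continuous G\r{a}rding inequality to the discrete level, up to a term that is controlled because the discrete kernel of $\divg$ is contained in (or well-approximated inside) the continuous one.

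Third, I would combine the discrete G\r{a}rding inequality with approximability of the exact solution $\bu$ by $\bX_{hp}$: since the discrete spaces are dense in $\bX$ as $N(h,p)\to\infty$, a standard Schatz-type duality argument (using that $a$ is, up to the compact term, coercive, and that the adjoint problem is also well-posed because $\kappa^2$ avoids the interior eigenvalues) yields an $N_0$ beyond which the discrete inf-sup constant is bounded below uniformly. C\'ea's lemma in its inf-sup form then gives (\ref{quasi-optimality}) with a constant independent of $h$ and $p$.

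The main obstacle is the second step: securing the \emph{uniform-in-$p$} stable discrete decomposition of $\bX_{hp}$ respecting $\ker\divg$. On quasi-uniform meshes the $h$-dependence is classical, but the $p$-robustness of the commuting projection-based interpolants on a Lipschitz polyhedral surface, together with the requisite $hp$-approximation estimates in the trace spaces $\bH^{-1/2}(\divg,\G)$ and $H^{-1/2}(\G)$, is the technically delicate ingredient; this is where the machinery of \cite{BespalovH_NpB,BespalovHH_Chp} does the real work, and everything else is assembled around it.
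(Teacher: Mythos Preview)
The paper gives no proof of this proposition: it is simply quoted from \cite[Theorem~1.2]{BespalovHH_Chp}, so there is no ``paper's own proof'' to compare against. Your decision to sketch the argument underlying that reference is therefore appropriate, and the overall architecture you describe (continuous Hodge-type splitting, matching discrete splitting via $hp$-stable commuting projectors, then a Schatz-type compactness argument) is indeed the skeleton of the proof in \cite{BespalovHH_Chp}.

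There is, however, a substantive error in your Step~1. You write that the bilinear form satisfies a standard G\r{a}rding inequality $a(\bv,\bv)\ge c\,\|\bv\|_{\bX}^2-\langle K\bv,\bv\rangle$ with $K$ compact. This is false for the EFIE and is precisely why the analysis is delicate. The two pieces of $a$ carry opposite signs: on the curl part $\bcurlg\phi$ one has $\divg(\bcurlg\phi)=0$, so the first (positive) term vanishes and only the \emph{negative} term $-\kappa^2\langle\bPsi_\kappa\,\bcurlg\phi,\bcurlg\phi\rangle$ survives; this is not a compact perturbation, since $\bPsi_0$ is coercive. What one actually proves is a \emph{generalised} G\r{a}rding inequality (``$T$-coercivity''): using the stable splitting $\bv=\bv_0+\bcurlg\phi$ one defines the sign-switching isomorphism $\Theta\bv:=\bv_0-\bcurlg\phi$ and shows
\[
\Re\,a(\bv,\Theta\bv)\ \ge\ c\,\|\bv\|_{\bX}^{2}-|\langle K\bv,\bv\rangle|,
\]
with $K$ compact. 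The discrete counterpart then requires not merely a stable discrete decomposition of $\bX_{hp}$, but one for which the associated discrete isomorphism $\Theta_{hp}$ approximates $\Theta$ in operator norm (this is the ``gap property'' in \cite{Buffa_05_RDS,BespalovHH_Chp}); only then does the abstract theory for compactly perturbed $T$-coercive forms yield a uniform discrete inf-sup condition and hence (\ref{quasi-optimality}). Your Steps~2--3 are essentially right once Step~1 is corrected in this way, and your identification of the $p$-robust discrete decomposition as the crux is accurate.
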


The next theorem is the main result of this paper.
It states convergence rates of the $hp$-BEM with quasi-uniform meshes
for the EFIE. These convergence rates (in both the mesh parameter $h$
and the polynomial degree $p$) are given explicitly in terms of the
singularity exponents of the vector fields in (\ref{ue})--(\ref{u2ev}).

\begin{theorem} \label{thm_main_hp}
Let $\bu \in \bX$ and $\bu_{hp} \in \bX_{hp}$ be the solutions of {\rm (\ref{bie_var})}
and {\rm (\ref{BEM})}, respectively. We assume that the source functional $\boldf$
in {\rm (\ref{bie_var})} is sufficiently smooth such that representation
{\rm (\ref{dec})--(\ref{u2ev})} holds for the solution $\bu$ of {\rm (\ref{bie_var})}.
Let $v_0\in V$, $e_0\in E(v_0)$ be a vertex-edge pair such that
\[
  \min\{\lambda_1^{v_0}+1/2, \lambda_2^{v_0}+1/2, \gamma_1^{e_0}, \gamma_2^{e_0}\} =
  \min_{v\in V, e\in E(v)}\min\,\{\lambda_1^v+1/2, \lambda_2^v+1/2, \gamma_1^e, \gamma_2^e\}
\]
with $\lambda_i^v$ and $\gamma_i^e$ ($i = 1,2$) being as in {\rm (\ref{ue})--(\ref{u2ev})}.
Then for any $h > 0$ and for every
$p \ge \min\,\{\lambda_1^{v_0},\lambda_2^{v_0},\g_1^{e_0} - \frac 12,\g_2^{e_0} - \frac 12\}$
there holds
\be \label{hp_rates_1}
    \|\bu - \bu_{hp}\|_{\bX} \le
    C\,\bigg(\frac{h}{p^{\,2}}\bigg)^{\min\{\lambda_1^{v_0}+1/2,\lambda_2^{v_0}+1/2,\gamma_1^{e_0},\gamma_2^{e_0}\}}
    \Big(1 + \log\frac ph\Big)^{\beta + \nu},
\ee
where
\be \label{beta}
    \beta = 
    \cases{
           \max\,\{q_1^{v_0}+s_1^{e_0}+\frac 12,\, q_2^{v_0}+s_2^{e_0}+\frac 12\}
                       & \hbox{if \ $\lambda_i^{v_0} = \g_i^{e_0}-\frac 12$ for $i=1,2$},\cr
    \noalign{\vskip3pt}
           \max\,\{q_1^{v_0}+s_1^{e_0}+\frac 12,\, q_2^{v_0}+s_2^{e_0}\}
                       & \hbox{if \ $\lambda_1^{v_0} = \g_1^{e_0}-\frac 12$,
                                  \ $\lambda_2^{v_0} \not= \g_2^{e_0}-\frac 12$},\cr
    \noalign{\vskip3pt}
           \max\,\{q_1^{v_0}+s_1^{e_0},\, q_2^{v_0}+s_2^{e_0}+\frac 12\}
                       & \hbox{if \ $\lambda_1^{v_0} \not= \g_1^{e_0}-\frac 12$,
                                  \ $\lambda_2^{v_0} = \g_2^{e_0}-\frac 12$},\cr
    \noalign{\vskip3pt}
           \max\,\{q_1^{v_0}+s_1^{e_0},\, q_2^{v_0}+s_2^{e_0}\}
                       & \hbox{otherwise}\cr
          }
\ee
with the numbers $s_i^{e_0}$ and $q_i^{v_0}$ ($i=1,2$) given in
{\rm(\ref{ue})} and {\rm(\ref{uv})}, respectively, and
\be \label{nu}
    \nu = \cases{
                 \frac 12
                 & \hbox{if $p = \min\,\{\lambda_1^{v_0},\lambda_2^{v_0},\g_1^{e_0}-1/2,\g_2^{e_0}-1/2\}$},\cr
                 \noalign{\vskip3pt}
                 0
                 & \hbox{otherwise}.\cr
                }
\ee
If $1 \le p < \min\,\{\lambda_1^{v_0},\lambda_2^{v_0},\g_1^{e_0} - \frac 12,\g_2^{e_0} - \frac 12\}$,
then for any $h > 0$ there holds
\be \label{hp_rates_2}
    \|\bu - \bu_{hp}\|_{\bX} \le C\,h^{p+1/2}.
\ee
\end{theorem}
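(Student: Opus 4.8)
The plan is to reduce everything to a best-approximation estimate via the quasi-optimality in Proposition~\ref{prop_solve}, so that it suffices to construct, for each singular term in (\ref{sing}), a good element $\bv \in \bX_{hp}$ and bound $\|\bu - \bv\|_{\bX}$. Using the splitting $\bu = \bu_{\rm reg} + \bu_{\rm sing}$, the regular part $\bu_{\rm reg} \in \bX^k$ with $k$ as large as we like contributes a term that is higher order (of the form $(h/p)^{k}$ up to logs) and hence negligible against the singular rate; this uses the $hp$-approximation results for smooth $\bH(\divg)$-fields on quasi-uniform meshes. The core of the argument is therefore the treatment of $\bu^e$, $\bu^v$, and $\bu^{ev}$, and here I would invoke Remark~\ref{rem_sing2}: each $\bu^s = \bcurlg\, w^s + \bv^s$ with $w^s \in H^{1/2}(\G)$ and $\bv^s \in \bH^{1/2}(\G)$ being scalar (componentwise) singularities of exactly the type treated in \cite{BespalovH_08_hpB,Bespalov_09_NPA}.

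First I would set up the discrete decomposition. On the reference element the BDM-space contains $(\CP_p(K))^2$ and also $\bcurlg\,\CP_{p+1}(K)$, while the RT-space contains $\bcurlg$ of the bubble-enriched scalar space; in both cases $\bX_{hp}$ contains $\bcurlg$ of the continuous piecewise-polynomial scalar space of degree $p+1$ (respecting the $\bH(\divg)$-conformity since $\divg\bcurlg = 0$ automatically). So for the $\bcurlg\,w^s$ part I approximate $w^s$ in $H^{1/2}(\G)$ by a continuous piecewise polynomial $w^s_{hp}$ of degree $p+1$ using \cite{BespalovH_08_hpB}, take $\bcurlg\,w^s_{hp} \in \bX_{hp}$, and observe that $\divg(\bcurlg(w^s - w^s_{hp})) = 0$, so $\|\bcurlg(w^s - w^s_{hp})\|_{\bX} = \|\bcurlg(w^s-w^s_{hp})\|_{\bH^{-1/2}_\|(\G)} \le C\|w^s - w^s_{hp}\|_{H^{1/2}(\G)}$ by boundedness of $\bcurlg: H^{1/2}(\G) \to \bH^{-1/2}_\|(\G)$. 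For the $\bv^s$ part in the BDM case I approximate each component in $H^{1/2}(\G)$ by continuous piecewise polynomials of degree $p$, use $(\CP_p(K))^2 \subset \bCP^{\rm BDM}_p(K)$, and control $\|\bv^s - \bv^s_{hp}\|_{\bX}$ by the graph norm, which in turn is bounded by the $H^{1/2}$-approximation error of the components plus the $H^{-1/2}$-error of their divergence (again covered by \cite{BespalovH_08_hpB} applied to $\divg\bv^s$, itself a scalar singularity). The scalar estimates of \cite{BespalovH_08_hpB} give precisely the factor $(h/p^2)^{\min\{\lambda_i^v+1/2,\gamma_i^e\}}$ with the logarithmic power governed by the $|\log|$-exponents $s_i^e$, $q_i^v$, producing $\beta$ as in (\ref{beta}) and the extra $\nu$ from the borderline case $p = \min\{\lambda_1^{v_0},\lambda_2^{v_0},\gamma_1^{e_0}-1/2,\gamma_2^{e_0}-1/2\}$; taking the worst term over all $v\in V$, $e\in E(v)$ yields the exponent in (\ref{hp_rates_1}).

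The main obstacle is the RT case for the $\bv^s$ part, exactly as the introduction warns (``this simple approach does not work for the RT-based BEM''): the RT-space $(\CP_{p-1}(K))^2 \oplus \bxi\CP_{p-1}(K)$ is strictly smaller than $(\CP_p(K))^2$, so componentwise approximation by degree-$p$ continuous piecewise polynomials is not available, and approximating by degree $p-1$ would lose a power of $p$ in the rate. The remedy is to exploit the special structure of $\bv^s$: in (\ref{ue})--(\ref{u2ev}) the second summands are either of the form (smooth factor)$\cdot x_{e2}^{\gamma}|\log x_{e2}|^{s}$ times a fixed vector, or vector-valued with the singular factor depending essentially on one variable. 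I would therefore construct the RT-approximant by hand on each element — on elements away from the singular support using standard RT-interpolation of the smooth remainder, and on the layer of elements near the edge/vertex using an explicit RT-field that matches the leading power behaviour and is glued via the $\bH(\divg)$-conforming degrees of freedom (normal moments on edges). This requires a careful scaling argument on the reference element tracking the $h_j$- and $p$-dependence through the Piola transform (\ref{Piola}), and checking that $\divg$ of the constructed field is again controlled in $H^{-1/2}$; this bookkeeping — deferred to Lemma~\ref{lm_approx_v} in Section~\ref{sec_proof_v} — is the technical heart of the paper. Finally, the sub-range $1 \le p < \min\{\lambda_1^{v_0},\lambda_2^{v_0},\gamma_1^{e_0}-1/2,\gamma_2^{e_0}-1/2\}$ is handled by the same machinery but now the polynomial degree is too low for $p$ to enter the rate, so the approximation is limited purely by the mesh and the scalar estimates degenerate to the pure $h$-version bound $h^{p+1/2}$, giving (\ref{hp_rates_2}).
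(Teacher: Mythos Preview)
Your overall plan --- quasi-optimality via Proposition~\ref{prop_solve}, approximate $\bu_{\rm reg}$ by Proposition~\ref{prop_approx_reg}, split $\bu_{\rm sing}=\bcurlg w+\bv$ and reduce to the scalar $hp$-estimates of \cite{BespalovH_08_hpB,Bespalov_09_NPA} --- matches the paper exactly, and the BDM case is essentially as you say (with one simplification: the $\divg$-part of the $\bX$-norm is handled directly by continuity of $\divg:\bH^{1/2}(\G)\to H^{-1/2}(\G)$, not by a separate scalar approximation of $\divg\bv^s$). One correction on the $\bcurlg w$ part: take $w^{hp}\in S_{hp}(\G)$, i.e.\ degree $p$, not $p+1$. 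Then $\CM_j^{-1}(\bcurlg w^{hp}|_{\G_j})=\bcurl(w^{hp}|_{\G_j}\circ T_j)\in(\CP_{p-1}(K))^2$, which lies in both $\bCP^{\rm RT}_p(K)$ and $\bCP^{\rm BDM}_p(K)$. Degree $p+1$ would fail for RT, since the divergence-free part of $\bCP^{\rm RT}_p(K)$ is exactly $\bcurl\,\CP_p(K)$, not $\bcurl\,\CP_{p+1}(K)$.

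The substantive divergence is your RT treatment of $\bv$. You reject componentwise degree-$(p-1)$ approximation because it ``would lose a power of $p$ in the rate'' and propose instead an explicit element-by-element RT field matching the leading power behaviour, glued through normal moments. But the premise is wrong: since $(p-1)^{-2\alpha}\le C\,p^{-2\alpha}$ for $p\ge 2$, no power of $p$ is lost, and Lemma~\ref{lm_approx_v} in the paper uses precisely degree $p-1$. The actual mechanism (Section~\ref{sec_proof_v}) is an $h$-scaled cut-off decomposition $\bv=\bphi+\bpsi$, where $\supp\bphi$ lies in the layer of elements abutting the edges and vertices and $\bpsi\in\bX^m$ vanishes in an $h$-neighbourhood of the singular set. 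Then $\bphi$ is approximated componentwise in $S_{h,p-1}(\G)$ (so $(\CP_{p-1})^2\subset\bCP^{\rm RT}_p$ elementwise; for $p=1$ one simply takes $\bphi_{hp}\equiv\bzero$ and uses the small support), while $\bpsi$ is handled by Proposition~\ref{prop_approx_reg} combined with explicit $h$-dependent bounds on $\|\bpsi\|_{\bX^k}$. Your hand-built RT construction might be made to work, but it is a different and, as sketched, much less concrete route; the paper's decomposition reduces everything to the existing scalar machinery of \cite{BespalovH_08_hpB} without any new elementwise interpolant.
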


\begin{proof}
Considering enough singularity terms in representation (\ref{sing})
the function $\bu_{\rm reg}$ in (\ref{reg}) is as regular as needed.
Then, due to the quasi-optimal convergence (\ref{quasi-optimality})
of the $hp$-BEM with quasi-uniform meshes, the assertion follows immediately
from the general approximation result given in Theorem~\ref{thm_gen_approx} below.
\end{proof}

\begin{remark} \label{rem_parallelograms}
We have only considered meshes of triangular elements on $\G$.
If the meshes contain also shape-regular parallelogram elements (i.e., affine images
of the reference square), then a priori error estimates of
Theorem~{\rm \ref{thm_main_hp}} remain valid only in the case of the RT-based BEM.
This is because all the auxiliary results needed for the proof are
valid in this case. This, however, is not true for the BDM-based BEM.
In particular, the arguments in the proofs of Proposition~{\rm \ref{prop_solve}} above
and Proposition~{\rm \ref{prop_approx_reg}} below rely essentially
on the fact that the involved polynomial spaces
form the exact $\bcurl\,{-}\,\div$ sequence (on the reference element),
a property the BDM-spaces fail to satisfy on the reference
square (see, e.g., {\rm \cite{Demkowicz_08_PES}}).
\end{remark}

\begin{remark} \label{rem_open_surface}
We have assumed that $\G$ is a polyhedral (closed) surface.
However, all arguments in our proofs carry over only with minor modifications
to the case of a piecewise plane orientable open surface $\G$. Note that in this case
there are no restrictions needed on the wave number $\kappa$ to ensure the uniqueness
of the solution to the EFIE; the strongest edge singularities in
{\rm (\ref{ue})} have the exponents $\gamma_i^e = \frac 12$ $(i=1,2)$;
the energy space for the EFIE is $\tilde\bH^{-1/2}_0(\divg,\G)$ and
the boundary element space $\bX_{hp}$ consists of $\bH(\divg,\G)$-conforming
polynomial vector fields with normal components vanishing on $\partial\G$
(see {\rm \cite{BespalovH_NpB,BespalovH_10_hpA}} for details).
\end{remark}

\section{General approximation result} \label{sec_gen_approx}
\setcounter{equation}{0}

In this section we prove the following general $hp$-approximation result for the
vector field $\bu$ given by formulas (\ref{dec})--(\ref{u2ev})
with the singularity exponents $\gamma_{i}^e$ and $\lambda_{i}^v$ ($i=1,2$)
satisfying the minimum requirements to guarantee $\bu \in \bX$.

\begin{theorem} \label{thm_gen_approx}
Let the vector field $\bu$ be given by {\rm (\ref{dec})--(\ref{u2ev})} on $\G$
with $\gamma_1^e,\,\gamma_2^e>0$ and $\lambda_1^v,\,\lambda_2^v>-\frac 12$
for each edge $e$ and every vertex $v$.
Also, let $v_0\in V$, $e_0\in E(v_0)$ be such that
\[
  \min\{\lambda_1^{v_0}+1/2, \lambda_2^{v_0}+1/2, \gamma_1^{e_0}, \gamma_2^{e_0}\} =
  \min_{v\in V, e\in E(v)}\min\,\{\lambda_1^v+1/2, \lambda_2^v+1/2, \gamma_1^e, \gamma_2^e\}
\]
with $\lambda_i^v$ and $\gamma_i^e$ ($i = 1,2$) being as in {\rm (\ref{ue})--(\ref{u2ev})}.
Then for any $h > 0$ and for every
$p \ge \min\,\{\lambda_1^{v_0},\lambda_2^{v_0},\g_1^{e_0} - \frac 12,\g_2^{e_0} - \frac 12\}$,
there exists $\bu^{hp} \in \bX_{hp}$ such that
\bea
     \|\bu - \bu^{hp}\|_{\bX}
     & \hskip-4pt \le \hskip-4pt &
     C\,\max\left\{
     h^{\min\{k,p\}+1/2}\,p^{-(k+1/2)},\right.
     \nonumber
     \\
     &     &
       \qquad\qquad\left.
       \bigg(\frac{h}{p^{\,2}}\bigg)^{\min\{\lambda_1^{v_0}+1/2,\lambda_2^{v_0}+1/2,\gamma_1^{e_0},\gamma_2^{e_0}\}}
       \Big(1 + \log\frac ph\Big)^{\beta + \nu}
     \right\},
     \label{gen_approx_1}
\eea
where $\beta$ and $\nu$ are defined by {\rm (\ref{beta})} and {\rm (\ref{nu})}, respectively.

If $1 \le p < \min\,\{\lambda_1^{v_0},\lambda_2^{v_0},\g_1^{e_0} - \frac 12,\g_2^{e_0} - \frac 12\}$,
then for any $h > 0$ there exists $\bu^{hp} \in \bX_{hp}$ such that
\be \label{gen_approx_2}
    \|\bu - \bu^{hp}\|_{\bX} \le C\,h^{\min\{k,p\}+1/2}.
\ee
\end{theorem}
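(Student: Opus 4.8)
The strategy is to approximate separately the regular part $\bu_{\rm reg}$ and each singular vector field in (\ref{sing}), then assemble a global $\bX_{hp}$-approximant by $\bH(\divg,\G)$-conforming patching and estimate the total error in the graph norm of $\bX$. For $\bu_{\rm reg} \in \bX^k$ I would invoke the quasi-uniform $hp$-approximation result in $\bX_{hp}$ (the analogue of \cite{BespalovH_10_hpA}, cited here as Proposition~\ref{prop_approx_reg}), which yields an error bounded by $C\,h^{\min\{k,p\}+1/2}\,p^{-(k+1/2)}$; this accounts for the first term in the maximum in (\ref{gen_approx_1}) and for the right-hand side of (\ref{gen_approx_2}). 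The bulk of the work is the singular part. Using Remark~\ref{rem_sing2}(i), every $\bu^s$ ($s=e,v,ev$) splits as $\bu^s = \bcurlg\,w^s + \bv^s$ with $w^s \in H^{1/2}(\G)$ and $\bv^s \in \bH^{1/2}(\G)$, and these scalar functions are exactly the scalar singularity types treated in \cite{BespalovH_08_hpB,Bespalov_09_NPA}.

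For the $\bcurlg\,w^s$ components the plan is: approximate $w^s$ by a continuous piecewise polynomial $w^{hp}$ of degree $p+1$ on the quasi-uniform mesh using the scalar $hp$-estimates of \cite{BespalovH_08_hpB} in $H^{1/2}(\G)$, then set the vector approximant to be $\bcurlg\,w^{hp}$, which automatically lies in $\bX_{hp}$ (since $\bcurlg$ of a continuous piecewise polynomial of degree $p+1$ is a divergence-free $\bH(\divg,\G)$-conforming piecewise polynomial of degree $p$, contained in both the RT- and BDM-spaces) and satisfies $\divg(\bu^s - \bcurlg\,w^{hp}) = \divg\,\bv^s$, so the $\divg$-error is controlled separately. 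Then $\|\bcurlg(w^s - w^{hp})\|_{\bH^{-1/2}_\|(\G)} \le C\,\|w^s - w^{hp}\|_{H^{1/2}(\G)}$ by continuity of $\bcurlg\colon H^{1/2}(\G)\to\bH^{-1/2}_\|(\G)$ (actually into $\bH^{-1/2}(\divg,\G)$ with zero divergence). Tracking the exponents $\lambda_i^v+1/2$, $\gamma_i^e$ and the logarithmic powers $q_i^v$, $s_i^e$ through the scalar estimates of \cite{BespalovH_08_hpB}, together with the combinatorial bookkeeping $\beta_1+\beta_2 = s_1^e+q_1^v$ etc., produces the exponent $\min\{\lambda_1^{v_0}+1/2,\lambda_2^{v_0}+1/2,\gamma_1^{e_0},\gamma_2^{e_0}\}$ and the log-power $\beta+\nu$ in (\ref{hp_rates_1}); the case distinctions in (\ref{beta}) come from whether the vertex and edge exponents in the two summands of $\bu_1^{ev}$ coincide (which affects how the logarithmic factors add up in the vertex-edge terms), and the extra $\tfrac12$ in $\nu$ reflects the borderline case $p=\min\{\lambda_1^{v_0},\dots\}$ where the scalar rate loses a half power of the log.

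For the remaining components $\bv^s = (v_1^s, v_2^s) \in \bH^{1/2}(\G)$ the BDM case is straightforward: approximate each scalar component $v_i^s$ by a continuous piecewise polynomial of degree $p$ using \cite{BespalovH_08_hpB} (this lands in the BDM space $(\CP_p(K))^2$ after the Piola transform, up to handling the transformation), and add the $\divg$-error estimate in $H^{-1/2}(\G)$, again via \cite{BespalovH_08_hpB} applied to $\divg\,\bv^s$. The RT case is the main obstacle, as the paper flags: the RT-space $\bCP^{\rm RT}_p(K) = (\CP_{p-1}(K))^2 \oplus \bxi\CP_{p-1}(K)$ is strictly smaller than $(\CP_p(K))^2$, so one cannot simply approximate the two components independently by full-degree polynomials. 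The remedy, which I expect to be carried out in Lemma~\ref{lm_approx_v} in Section~\ref{sec_proof_v}, is to exploit the structure of $\bv^s$ — on each face it is, up to the cut-offs, of the form (power)$\times$(smooth vector), and one can use the specific shape of the RT-space (it contains all gradients of degree-$p$ polynomials plus lower-order vectors) by writing the smooth factor's polynomial approximant cleverly, or by first approximating $\divg\,\bv^s$ and a scalar potential and then correcting — essentially constructing the RT-approximant element by element so that its divergence matches a good scalar approximant of $\divg\,\bv^s$ while its tangential part matches $\bv^s$, losing at most a logarithmic factor relative to the BDM rate. Once both lemmas (the $\bcurlg\,w^s$ part and the $\bv^s$ part) are in hand, I would combine all the pieces, note that the number of vertices and edges is finite so the sum over $s$ costs only a constant, take the worst exponent over all $(v,e)$ — realised at $(v_0,e_0)$ by hypothesis — and arrive at (\ref{gen_approx_1})–(\ref{gen_approx_2}) after absorbing $h$-powers into the maximum and simplifying $(h/p^2)$-powers using $h \le \sigma_2 h_j$ and $p\ge 1$.
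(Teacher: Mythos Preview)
Your overall architecture matches the paper's: treat $\bu_{\rm reg}$ via Proposition~\ref{prop_approx_reg}, split the singular part as $\bcurlg\,w+\bv$ (Remark~\ref{rem_sing2}(i)), approximate $w$ by a scalar continuous piecewise polynomial and take $\bcurlg$, and handle $\bv$ componentwise for BDM, with extra work for RT. The paper does exactly this, packaging the two singular pieces into Lemmas~\ref{lm_approx_w} and~\ref{lm_approx_v}.

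There is, however, a concrete slip in your $\bcurlg\,w$ step. You propose $w^{hp}\in S_{h,p+1}(\G)$ so that $\bcurlg\,w^{hp}$ is a ``piecewise polynomial of degree $p$, contained in both the RT- and BDM-spaces''. That is false for RT: on the reference triangle the exact sequence is $\CP_p(K)\xrightarrow{\bcurl}\bCP^{\rm RT}_p(K)\xrightarrow{\div}\CP_{p-1}(K)$, so only curls of $\CP_p(K)$ land in $\bCP^{\rm RT}_p(K)$; curls of $\CP_{p+1}(K)$ fill all of $(\CP_p(K))^2=\bCP^{\rm BDM}_p(K)$, which strictly contains $\bCP^{\rm RT}_p(K)$. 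The fix is simply to take $w^{hp}\in S_{hp}(\G)$ (degree $p$), whence $\CM_j^{-1}(\bcurlg w^{hp}|_{\G_j})\in(\CP_{p-1}(K))^2\subset\bCP_p(K)$ for both families --- this is exactly what the paper does in Section~\ref{sec_proof_w}.

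For the RT case of $\bv$, your sketch (match a scalar approximant of $\divg\bv^s$ element by element and then correct the tangential part) is plausible but not what the paper does, and it is not clear it goes through without further loss. The paper's device is different and quite clean: write $\bv=\bphi+\bpsi$ using $h$-scaled cut-offs so that $\bphi$ is supported in the union of elements touching edges/vertices while $\bpsi\in\bX^m$ vanishes in an $h$-neighbourhood of the skeleton. Then (i) approximate each component of $\bphi$ by a polynomial of degree $p-1$ (so $\CM_j^{-1}(\bphi^{hp}|_{\G_j})\in(\CP_{p-1}(K))^2\subset\bCP^{\rm RT}_p(K)$), using the scalar estimates of \cite{BespalovH_08_hpB} at degree $p-1$ --- the drop from $p$ to $p-1$ costs nothing in the rate; and (ii) approximate $\bpsi$ directly via Proposition~\ref{prop_approx_reg}, tracking the blow-up of $\|\bpsi\|_{\bX^k}$ in $h$. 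Also, note that in the BDM case the paper does not estimate $\|\divg(\bv-\bv^{hp})\|_{H^{-1/2}(\G)}$ separately; it simply uses the continuity of $\divg:\bH^{1/2}(\G)\to H^{-1/2}(\G)$ to bound the full $\bX$-norm by $\|\bv-\bv^{hp}\|_{\bH^{1/2}(\G)}$.
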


In order to prove this theorem one needs to find discrete vector fields belonging to $\bX_{hp}$
and approximating the smooth and singular parts of $\bu$ such that the approximation errors
satisfy the upper bounds in (\ref{gen_approx_1}) and (\ref{gen_approx_2}).

We start with formulating the following $hp$-approximation result
for regular vector fields on $\G$. This result will be used, in particular, to approximate
the vector field $\bu_{\rm reg} \in \bX^k$ in (\ref{dec}).

\begin{prop} \label{prop_approx_reg}
Let $P_{hp}:\, \bX \rightarrow \bX_{hp}$ be the orthogonal projection
with respect to the norm in $\bX$. If $\bu \in \bX^k$ with
$k > 0$, then
\be \label{approx_reg}
    \|\bu - P_{hp}\bu\|_{\bX} \le
    C\,h^{\min\{k,p\}+1/2}\,p^{-(k+1/2)}\,\|\bu\|_{\bX^k}
\ee
with a positive constant $C$ independent of $h$, $p$, and $\bu$.
\end{prop}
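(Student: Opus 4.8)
The plan is to reduce the estimate \eqref{approx_reg} to a scaling argument on the reference element combined with known $hp$-approximation results for $\bH(\divg)$-conforming interpolation. First I would recall that the norm in $\bX = \bH^{-1/2}(\divg,\G)$ is defined through the graph norm $\|\bu\|_{\bX}^2 = \|\bu\|_{\bH^{-1/2}_{\|}(\G)}^2 + \|\divg\,\bu\|_{H^{-1/2}(\G)}^2$. Since $P_{hp}$ is the $\bX$-orthogonal projection, it suffices to exhibit \emph{some} element $\bv_{hp}\in\bX_{hp}$ with $\|\bu - \bv_{hp}\|_{\bX}$ bounded by the right-hand side; the natural candidate is the $\bH(\divg)$-conforming projection-based interpolant of Demkowicz and co-authors (\cite{DemkowiczB_05_HHH,Demkowicz_08_PES}), assembled element-by-element via the Piola transform \eqref{Piola}, which commutes with $\divg$ and hence produces a genuinely $\bH(\divg,\G)$-conforming field whose divergence is the scalar projection-based interpolant of $\divg\,\bu$.

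Next I would localise. Because $\bu\in\bX^k$ means $\bu\in\bH^k_{-}(\G)$ and $\divg\,\bu\in H^k_{-}(\G)$ piecewise on each face, and the energy norms of negative order can be estimated from above by $L^2$-type (or positive-order) norms after paying a factor of $h^{1/2}$ (this is the standard gain for the $-1/2$ norm on a quasi-uniform mesh: $\|\cdot\|_{\bH^{-1/2}(\G)}\lesssim h^{1/2}\|\cdot\|_{\bL^2(\G)}$ when applied to the interpolation error, cf.\ the Aubin--Nitsche / inverse-estimate device used in \cite{BespalovH_10_hpA}), the task becomes: bound $\|\bu - \bv_{hp}\|_{\bL^2(\G)}$ and $\|\divg(\bu - \bv_{hp})\|_{L^2(\G)}$ by $C\,h^{\min\{k,p\}+1}p^{-(k+1)}$ times the regularity norm, which after the $h^{1/2}$-gain yields the stated rate $h^{\min\{k,p\}+1/2}p^{-(k+1/2)}$. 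On each element $\G_j$ I would pull back to the reference triangle $K$ via $\CM_j^{-1}$ and $T_j^{-1}$, apply the reference $hp$-estimate for the projection-based interpolation operator in $\bH^k(K)$ — which gives the $h^{\min\{k,p\}+1}p^{-(k+1)}$ behaviour with explicit $p$-dependence — and then scale back, tracking the powers of $h_j$ produced by $B_j$, $J_j$, and the relations $|J_j|\simeq h_j^2$, $\|B_j\|\simeq h_j$ from the quasi-uniformity \eqref{mesh}. Summing over $j$ and using shape-regularity collapses all $h_j$ into the global $h$.

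The main obstacle I anticipate is the careful handling of the dual (negative-order) norms: one cannot simply interpolate element-wise in $H^{-1/2}$, so the $h^{1/2}$-gain must be justified globally, using that the interpolation error is orthogonal (or nearly so) to low-order content and invoking an inverse inequality on the quasi-uniform mesh together with a duality argument in the spirit of \cite[Section~3]{BespalovH_10_hpA}. A secondary technical point is ensuring the commuting-diagram property survives the Piola transform across element interfaces so that $\bv_{hp}$ is truly in $\bH(\divg,\G)$ (normal continuity), and that the reference-element estimate one quotes is stated with the sharp simultaneous $h$- and $p$-dependence; for the latter I would cite the relevant $hp$ projection-based interpolation bounds already used in \cite{BespalovH_NpB,BespalovHH_Chp}. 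Once these ingredients are in place, replacing $\bv_{hp}$ by $P_{hp}\bu$ on the left is immediate from the projection being a best approximation in $\|\cdot\|_{\bX}$.
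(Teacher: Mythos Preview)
The paper itself does not give an independent proof of this proposition: it simply states that ``the proof is given in \cite[Theorem~4.1]{BespalovH_10_hpA} for the case of RT-spaces, and it carries over without essential modifications to the case of BDM-spaces on triangular elements.'' Your sketch is essentially an attempt to reconstruct the argument of that cited theorem, and the overall strategy --- take an $\bH(\divg)$-conforming (projection-based) interpolant, use Piola scaling and the commuting-diagram property, then recover the extra half-order in the negative norm by a global duality/inverse-inequality argument on the quasi-uniform mesh --- is indeed the approach used in \cite{BespalovH_10_hpA}.

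That said, the exponent bookkeeping in your middle paragraph is inconsistent. You claim an $L^2$ error of order $h^{\min\{k,p\}+1}p^{-(k+1)}$ and then say that an ``$h^{1/2}$-gain'' yields $h^{\min\{k,p\}+1/2}p^{-(k+1/2)}$; multiplying those factors does not give that result. The correct accounting is: the element-wise $\bH(\div)$-interpolation error in $L^2$ (for $\bu$ and for $\divg\bu$) is $O(h^{\min\{k,p\}}p^{-k})$, and the passage from $L^2$ to the $-\tfrac12$-order norm gains a factor $(h/p)^{1/2}$, not merely $h^{1/2}$. Also, your displayed inequality $\|\cdot\|_{\bH^{-1/2}(\G)}\lesssim h^{1/2}\|\cdot\|_{\bL^2(\G)}$ is false for generic functions; the gain only materialises for the interpolation error through a duality argument exploiting that a polynomial approximation of the test function can be subtracted (this is precisely the ``obstacle'' you flag afterwards, so you should not state the crude bound as if it were automatic). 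Once those two points are corrected, your outline matches what \cite{BespalovH_10_hpA} does, and the replacement of the interpolant by the orthogonal projection $P_{hp}$ is, as you say, immediate.
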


The proof is given in \cite[Theorem~4.1]{BespalovH_10_hpA} for the case
of RT-spaces, and it carries over without
essential modifications to the case of BDM-spaces on {\em triangular}
elements (cf. Remark~\ref{rem_parallelograms}).

Now, we will study approximations of the singular part $\bu_{\rm sing}$
in representation (\ref{dec}). By (\ref{sing})--(\ref{u2ev}) and due to the arguments
in Remark~\ref{rem_sing2}~(i), we conclude that $\bu_{\rm sing}$ can be written as
\be \label{dec_sing}
    \bu_{\rm sing} = \bcurlg\, w + \bv = \bcurlg\, w + (v_1,\,v_2),
\ee
where $w \in H^{1/2}(\G)$ and $\bv \in \bH^{1/2}(\G)$.

Let us define the following discrete space (of continuous piecewise polynomials)
over the mesh $\Delta_h$:
\[
  S_{hp}(\G) := \{v\in C^0(\G);\; v|_{\G_j} \circ T_j \in \CP_p(K),\ j=1,\ldots,J\}.
\]
We will also need the following functions of $h$ and $p$:
\be \label{f_j(hp)}
    f_j(h,p) :=
    h^{\alpha_j} p^{-2 \alpha_j} (1 + \log(p/h))^{\tilde\beta_j+\nu_j},
\ee
where $j = 1,2$,
\be \label{alpha_j}
    \alpha_j := \min\,\{\lambda_j^{v_0}+1/2,\g_j^{e_0}\},
\ee
\be \label{beta_j}
    \tilde\beta_j := \cases{
                            q_j^{v_0} + s_j^{e_0} +\frac 12
                            & \hbox{if $\lambda_j^{v_0} = \g_j^{e_0} - \frac 12$},\cr
                     \noalign{\vskip3pt}
                            q_j^{v_0} + s_j^{e_0}
                            & \hbox{otherwise},\cr
                           }
\ee
\be \label{nu_j}
    \nu_j := \cases{
                    \frac 12
                    & \hbox{if $p = \alpha_j - \frac 12$},\cr
                     \noalign{\vskip3pt}
                    0
                    & \hbox{otherwise},\cr
                   }
\ee
and the numbers $\g_j^{e_0},\, \lambda_j^{v_0},\, s_j^{e_0},\, q_j^{v_0}$ ($j = 1,2$)
are given in (\ref{ue})--(\ref{u2ev}) for the vertex-edge pair $(v_0,e_0)$
introduced in the formulation of Theorem~\ref{thm_gen_approx}.

In the next two lemmas we formulate approximation results for the
vector fields $\bcurlg\, w$ and $\bv$ on the right-hand side of (\ref{dec_sing}).
The proofs are given in Section~\ref{sec_proofs} below.

\begin{lemma} \label{lm_approx_w}
Let $w \in H^{1/2}(\G)$ be the scalar singular function in representation {\rm (\ref{dec_sing})}.
Then for any $h > 0$ and $p \ge 1$, there exists $w^{hp} \in S_{hp}(\G)$
such that $\bcurlg\, w^{hp} \in \bX_{hp}$ and there holds
\be \label{approx_w}
    \|\bcurlg\,w - \bcurlg\,w^{hp}\|_{\bX} \le
    \cases{
           C\,f_1(h,p)
           & \hbox{if \ $p \ge \alpha_1 - \frac 12$},\cr
           \noalign{\vskip3pt}
           C\,h^{p+1/2}
           & \hbox{if \ $1 \le p < \alpha_1 - \frac 12$},\cr
          }
\ee
where $f_1(h,p)$ and $\alpha_1$ are defined by {\rm (\ref{f_j(hp)})}
and {\rm (\ref{alpha_j})}, respectively.
\end{lemma}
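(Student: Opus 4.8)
The plan is to reduce the vectorial estimate for $\bcurlg\,w$ to a scalar $hp$-approximation problem for $w$ in the trace norm $\|\cdot\|_{H^{1/2}(\G)}$, and then invoke the results of \cite{BespalovH_08_hpB} for scalar singularities. The key observation is that $\bcurlg$ is the tangential rotation of a scalar function, so $\divg\,\bcurlg\,w = 0$; consequently, if $w^{hp}\in S_{hp}(\G)$ then $\bcurlg\,w^{hp}$ is automatically $\bH(\divg,\G)$-conforming with vanishing surface divergence, and one checks, element by element using the Piola transform \eqref{Piola} and the chain rule, that $\CM_j^{-1}(\bcurlg\,w^{hp}|_{\G_j})$ lands in $\bCP_p(K)$ (it is, up to the constant factor $1/J_j$ and the matrix $B_j$, the reference-element $\bcurl$ of $w^{hp}\circ T_j\in\CP_p(K)$, which lies in $(\CP_{p-1}(K))^2\subset\bCP_p(K)$ for both the RT- and BDM-families). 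Hence $\bcurlg\,w^{hp}\in\bX_{hp}$ for every $w^{hp}\in S_{hp}(\G)$, and moreover
\[
  \|\bcurlg\,w-\bcurlg\,w^{hp}\|_{\bX}
  = \|\bcurlg(w-w^{hp})\|_{\bH^{-1/2}_{\|}(\G)}
  \le C\,\|w-w^{hp}\|_{H^{1/2}(\G)},
\]
since $\divg\,\bcurlg(w-w^{hp})=0$ removes the divergence part of the graph norm and $\bcurlg:\,H^{1/2}(\G)\to\bH^{-1/2}_{\|}(\G)$ is bounded.

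It then remains to choose $w^{hp}\in S_{hp}(\G)$ realising the bound on the right-hand side of \eqref{approx_w}. By Remark~\ref{rem_sing2}, $w$ is a finite sum of scalar vertex-, edge-, and vertex-edge singularities of exactly the type treated in \cite{BespalovH_08_hpB}: each term contributing to $w$ in \eqref{ue}, \eqref{uv}, \eqref{u1ev}, \eqref{u2ev} carries a singularity exponent from the list $\{\lambda_1^v,\,\gamma_1^e-\tfrac12,\,\lambda_1^v-\gamma_1^e+\gamma_1^e=\lambda_1^v\}$ etc., together with a power of a logarithm whose exponent is one of $q_1^v,\,s_1^e$, or their sum. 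Applying the scalar $hp$-estimates of \cite{BespalovH_08_hpB} on quasi-uniform meshes to each term separately and summing, one obtains an $H^{1/2}(\G)$-error of order $h^{\alpha}p^{-2\alpha}(1+\log(p/h))^{\tilde\beta+\nu}$, where the exponent $\alpha$ is the smallest singularity exponent appearing in $w$ over all vertices and edges, the logarithmic exponent $\tilde\beta$ collects the corresponding $q$- and $s$-parameters (with the extra $\tfrac12$ in the resonance case $\lambda_j^{v_0}=\gamma_j^{e_0}-\tfrac12$, which is where two singular contributions with coincident exponents superpose), and $\nu$ carries the extra $\tfrac12$ in the borderline case $p=\alpha-\tfrac12$. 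The definitions \eqref{f_j(hp)}--\eqref{nu_j} are tailored precisely so that this reduced rate equals $f_1(h,p)$: the component $\bcurlg\,w$ picks up the index-$1$ data $\lambda_1^{v_0},\,\gamma_1^{e_0},\,q_1^{v_0},\,s_1^{e_0}$, giving $\alpha=\alpha_1=\min\{\lambda_1^{v_0}+\tfrac12,\gamma_1^{e_0}\}$. In the low-degree regime $1\le p<\alpha_1-\tfrac12$ the polynomial degree is below the singularity exponent, the mesh-size factor dominates, and the scalar theory yields the pure algebraic rate $h^{p+1/2}$.

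The main obstacle is bookkeeping rather than analysis: one must verify carefully that every singular term contributing to the scalar function $w$ in \eqref{dec_sing} is genuinely of the form covered by the scalar approximation theory of \cite{BespalovH_08_hpB} — in particular that the edge-vertex terms $\bu_1^{ev}$, after being rewritten via \eqref{us} as $\bcurlg$ of a scalar function on $\G$, inherit an anisotropic singularity $x_{e1}^{\lambda_1^v-\gamma_1^e}x_{e2}^{\gamma_1^e}$ of the admissible vertex-edge type with the stated logarithmic weights $\beta_1,\beta_2$ summing to $s_1^{e_0}+q_1^{v_0}$ — and then to track how the worst exponent and worst logarithmic power combine across the (finitely many) vertices and edges to produce exactly $\alpha_1$, $\tilde\beta_1$, $\nu_1$. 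Once this matching of parameters is done, the estimate \eqref{approx_w} follows directly from the cited scalar results together with the boundedness of $\bcurlg$ into $\bH^{-1/2}_{\|}(\G)$ and the divergence-free property established above.

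\qed
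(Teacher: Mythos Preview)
Your proposal is correct and follows essentially the same route as the paper's proof: reduce the $\bX$-norm to $\|\bcurlg(w-w^{hp})\|_{\bH^{-1/2}_{\|}(\G)}$ via $\divg\,\bcurlg=0$, bound this by $\|w-w^{hp}\|_{H^{1/2}(\G)}$ using the continuity of $\bcurlg$, invoke the scalar $hp$-results of \cite{BespalovH_08_hpB,Bespalov_09_NPA}, and verify $\bcurlg\,w^{hp}\in\bX_{hp}$ through the Piola identity $\CM_j^{-1}(\bcurlg\,w^{hp}|_{\G_j})=\bcurl(w^{hp}|_{\G_j}\circ T_j)\in(\CP_{p-1}(K))^2\subset\bCP_p(K)$. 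One minor remark: this Piola identity is in fact exact (the factors $J_j$ and $B_j$ cancel precisely for the two-dimensional $\bcurl$), so your ``up to the constant factor $1/J_j$ and the matrix $B_j$'' qualification is unnecessary.
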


\begin{lemma} \label{lm_approx_v}
Let $\bv \in \bH^{1/2}(\G)$ be the singular vector field in representation {\rm (\ref{dec_sing})}.
Then for any $h > 0$ and $p \ge 1$, there exists $\bv^{hp} \in \bX_{hp}$
satisfying
\be \label{approx_v}
    \|\bv - \bv^{hp}\|_{\bX} \le
    \cases{
           C\,f_2(h,p)
           & \hbox{if \ $p \ge \alpha_2 - \frac 12$},\cr
           \noalign{\vskip3pt}
           C\,h^{p+1/2}
           & \hbox{if \ $1 \le p < \alpha_2 - \frac 12$},\cr
          }
\ee
where $f_2(h,p)$ and $\alpha_2$ are defined by {\rm (\ref{f_j(hp)})}
and {\rm (\ref{alpha_j})}, respectively.
\end{lemma}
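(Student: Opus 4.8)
The plan is to reduce the vector-valued approximation problem to scalar $hp$-approximation in $H^{1/2}$, for which the sharp estimates of \cite{BespalovH_08_hpB} are available, and then to control the $\divg$-part of the $\bX$-norm separately. Recall from (\ref{us}) and Remark~\ref{rem_sing2} that $\bv=(v_1,v_2)$ with $v_j\in H^{1/2}(\G)$ being scalar singularities of the standard vertex/edge/vertex-edge type. First I would split $\bv$ into its constituent singular terms (edge, vertex, edge-vertex) localised near $v_0$ and near the other vertices/edges; by the choice of $(v_0,e_0)$ the dominant contribution comes from the terms with exponents $\alpha_2=\min\{\lambda_2^{v_0}+1/2,\gamma_2^{e_0}\}$, and all remaining terms are of strictly higher regularity, hence absorbed into the stated bound. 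So it suffices to treat one model singular term at a time on the reference configuration (a plane sector with vertex at the origin and $e_0$ along the positive $x_1$-axis).

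For the BDM-based space this is essentially immediate: since $\bCP_p^{\rm BDM}(K)=(\CP_p(K))^2$, we may approximate each component $v_j$ by a scalar $w_j^{hp}\in S_{hp}(\G)$ using \cite[the relevant theorems]{BespalovH_08_hpB}, obtaining $\|v_j-w_j^{hp}\|_{H^{1/2}(\G)}\le C f_2(h,p)$ (resp.\ $C h^{p+1/2}$ in the low-degree regime), and then $\bv^{hp}:=(w_1^{hp},w_2^{hp})$ lies in $\bX_{hp}$ since componentwise continuous piecewise polynomials are $\bH(\divg)$-conforming; the $\divg$-contribution to $\|\bv-\bv^{hp}\|_{\bX}$ is handled because $\divg$ maps $\bH^{1/2}(\G)$-type singularities into $H^{-1/2}(\G)$ boundedly and the scalar estimates already control the $H^{1/2}$-norm (of which $H^{-1/2}$ of the derivative is a weaker quantity). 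I would organise this so that the $\bX$-norm is estimated via $\|\bv-\bv^{hp}\|_{\bX}\le C\big(\|\bv-\bv^{hp}\|_{\bH^{-1/2}_{\|}(\G)}+\|\divg(\bv-\bv^{hp})\|_{H^{-1/2}(\G)}\big)$, bounding the first term by interpolation between $\bL^2$ and $\bH^{1/2}$ estimates and the second by a scaling/commuting argument using that $\divg$ on each element transforms under the Piola map with a factor $J_j^{-1}$.

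The genuinely hard part is the RT-based case, as flagged in the introduction: $\bCP_p^{\rm RT}(K)=(\CP_{p-1}(K))^2\oplus\bxi\CP_{p-1}(K)$ is strictly smaller than $(\CP_p(K))^2$, so one cannot simply take the componentwise scalar approximants. The plan here is to build $\bv^{hp}$ element by element. On each triangle I would first take the scalar $hp$-approximants $w_j^{hp}$ of the components (of degree $p$), then correct them: write $(w_1^{hp},w_2^{hp})$ on $K$ and project/modify it into $\bCP_p^{\rm RT}(K)$ by a local operator that preserves the normal trace on $\partial K$ (so that the global function stays in $\bH(\divg,\G)$) while changing the field only by a polynomial of one degree lower, whose $\bL^2(K)$-norm is controlled by the jump/defect between consecutive degrees — which for the singular approximants of \cite{BespalovH_08_hpB} decays at the required rate. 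Concretely, I expect to use the decomposition of an $(\CP_p)^2$ field into its RT-part plus a remainder supported in the "bubble" directions, estimate the remainder by an inverse-type inequality on $K$ combined with the fact that the degree-$p$ and degree-$(p-1)$ scalar approximants differ by $O(f_2(h,p))$ in the relevant norm, and then sum over elements using quasi-uniformity (\ref{mesh}) and the Piola-transform scaling. Controlling this RT-correction in the $\divg$-part of the norm — ensuring the correction's divergence is $O(h^{-1}f_2(h,p))$ in $L^2$, which interpolates to the needed $H^{-1/2}$ bound — is the step I expect to require the most care, and is presumably where the "additional technical arguments" mentioned before Lemma~\ref{lm_approx_v} enter; I would prove it via a direct trace-preserving construction on $K$ rather than via a general-purpose commuting interpolation operator, since the latter would lose the logarithmic sharpness.
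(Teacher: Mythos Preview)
Your BDM argument is essentially the paper's: componentwise scalar approximation in $S_{hp}(\G)$, then bound the $\bX$-norm via the continuity of $\divg:\bH^{1/2}(\G)\to H^{-1/2}(\G)$. That part is fine.

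The RT argument, however, has a genuine gap. Your plan is to take the degree-$p$ componentwise approximant $(w_1^{hp},w_2^{hp})\in(\CP_p(K))^2$ on each element and then project it locally into $\bCP_p^{\rm RT}(K)$ by an operator that \emph{preserves the normal trace on $\partial K$}. But this is impossible: the normal trace of any field in $\bCP_p^{\rm RT}(K)=(\CP_{p-1}(K))^2\oplus\bxi\,\CP_{p-1}(K)$ on an edge of $K$ lies in $\CP_{p-1}$ of that edge, whereas the normal trace of a generic $(\CP_p(K))^2$ field has degree $p$. The trace spaces do not match, so no trace-preserving local projection exists. If instead you first project the edge traces down to degree $p-1$, neighbouring elements will in general produce \emph{different} degree-$(p-1)$ traces on their shared edge, and global $\bH(\divg)$-conformity is lost; repairing this is not an elementwise problem, and your inverse-inequality control of a ``bubble remainder'' does not address it.

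The paper avoids this obstruction by a different device. It decomposes $\bv=\bphi+\bpsi$ using $h$-scaled cut-offs, so that $\bphi$ is the singular part supported in an $O(h)$-neighbourhood of the skeleton and $\bpsi\in\bX^m$ is smooth (with $h$-dependent norms). The smooth part is handled by Proposition~\ref{prop_approx_reg}, after estimating $\|\bpsi\|_{\bX^k}$ explicitly in $h$. For the singular part one simply takes \emph{degree $p-1$} componentwise scalar approximants $\varphi_i^{hp}\in S_{h,p-1}(\G)$; since $(\CP_{p-1}(K))^2\subset\bCP_p^{\rm RT}(K)$, the vector $\bphi^{hp}=(\varphi_1^{hp},\varphi_2^{hp})$ is automatically in $\bX_{hp}$, and replacing $p$ by $p-1$ in the scalar bounds costs only a constant. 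The case $p=1$ (where degree $p-1$ is useless) is handled separately by taking $\bphi^{hp}\equiv\bzero$, which works precisely because the $O(h)$-support of $\bphi$ forces $\|\bphi\|_{\bH^{1/2}(\G)}\le C\,h^{\alpha_2}(1+\log(1/h))^{\tilde\beta_2}$. No elementwise correction into the RT space is ever needed.
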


Now we are able to prove Theorem~\ref{thm_gen_approx}.
\medskip

\noindent{\bf Proof of Theorem~\ref{thm_gen_approx}.}
For the regular vector field $\bu_{\rm reg} \in \bX^k$ in (\ref{dec}) we use the
orthogonal projection $P_{hp}:\,\bX \rightarrow \bX_{hp}$
with respect to the norm in $\bX$ to define
$\bu_{\rm reg}^{hp} := P_{hp} \bu_{\rm reg} \in \bX_{hp}$.
Then we have by Proposition~\ref{prop_approx_reg}
\be \label{approx_reg_1}
    \|\bu_{\rm reg} - \bu_{\rm reg}^{hp}\|_{\bX} \le
    C\,h^{\min\{k,p\}+1/2}\,p^{-(k+1/2)}.
\ee
Since the singular part of decomposition (\ref{dec}) can be written as in (\ref{dec_sing}),
we use approximations $w^{hp} \in S_{hp}(\G)$ and $\bv^{hp} \in \bX_{hp}$
from the above two lemmas to define
$\bu_{\rm sing}^{hp} := \bcurlg\, w^{hp} + \bv^{hp} \in \bX_{hp}$.
Then, applying the triangle inequality, we obtain by (\ref{approx_w}) and (\ref{approx_v})
\be \label{approx_sing}
    \|\bu_{\rm sing} - \bu_{\rm sing}^{hp}\|_{\bX} \le
    \cases{
           C\,\max \left\{f_1(h,p),\;f_2(h,p)\right\}
           & \hbox{if \ $p \ge \min\,\{\alpha_1,\alpha_2\} - \frac 12$},\cr
           \noalign{\vskip3pt}
           C\,h^{p+1/2}
           & \hbox{if \ $1 \le p < \min\,\{\alpha_1,\alpha_2\} - \frac 12$}.\cr
          }
\ee
Now, we set $\bu^{hp} := \bu_{\rm reg}^{hp} + \bu_{\rm sing}^{hp} \in \bX_{hp}$.
Combining estimates (\ref{approx_reg_1}) and (\ref{approx_sing}), 
applying the triangle inequality, and using expressions
(\ref{f_j(hp)}) and (\ref{alpha_j}) for the functions $f_j(h,p)$
and the parameters $\alpha_j$, respectively, we prove the desired estimates in
(\ref{gen_approx_1}) and (\ref{gen_approx_2}).\qed

\section{Proofs of technical lemmas} \label{sec_proofs}
\setcounter{equation}{0}

In this section we prove Lemmas~\ref{lm_approx_w} and~\ref{lm_approx_v}.

\subsection{Proof of Lemma~\ref{lm_approx_w}} \label{sec_proof_w}

Recalling Remark~\ref{rem_sing2}~(ii), we use 
the results of \cite{BespalovH_08_hpB,Bespalov_09_NPA}
to find the desired piecewise polynomial $w^{hp} \in S_{hp}(\G)$
such that the norm $\|w - w^{hp}\|_{H^{1/2}(\G)}$ is bounded as
in (\ref{approx_w}) (see Theorems~5.1,~5.2 in \cite{BespalovH_08_hpB}
and Theorem~4.1 in \cite{Bespalov_09_NPA}). Then, recalling the fact that the
the operator $\bcurlg: H^{1/2}(\G) \rightarrow \bH^{-1/2}_{\|}(\G)$
is continuous (see \cite{BuffaC_01_TII}), we derive the estimate in (\ref{approx_w}):
\beas
     \|\bcurlg\,w - \bcurlg\,w^{hp}\|_{\bX}
     & = &
     \|\bcurlg (w - w^{hp})\|_{\bH^{-1/2}_{\|}(\G)}
     \\[3pt]
     & \le &
     C \|w - w^{hp}\|_{H^{1/2}(\G)} \le
    \cases{
           C\,f_1(h,p)
           & \hbox{if \ $p \ge \alpha_1 - \frac 12$},\cr
           \noalign{\vskip3pt}
           C\,h^{p+1/2}
           & \hbox{if \ $1 \le p < \alpha_1 - \frac 12$}.\cr
          }
\eeas
It remains to prove that $\bcurlg\, w^{hp} \in \bX_{hp}$.
In fact, it is easy to check (see~\cite[p.~615]{BespalovH_NpB}) that
\[
     \CM_j^{-1}(\bcurlg\, w^{hp}|_{\G_j}) =
     \bcurl\, (w^{hp}|_{\G_j} \circ T_j),\qquad
     \hbox{where
           $\bcurl = \Big(
                          \hbox{$\frac{\partial}{\partial \xi_{2}}$},\,
                          -\hbox{$\frac{\partial}{\partial \xi_{1}}$}
                     \Big)$.
          }
\]
Hence, recalling that $w^{hp}|_{\G_j} \circ T_j \in \CP_p(K)$,
we conclude that
$\CM_j^{-1}(\bcurlg\, w^{hp}|_{\G_j}) \in
 (\CP_{p-1}(K))^2 \subset \bCP_p(K)$
(this is because $(\CP_{p-1}(K))^2$ is a subset of both $\bCP^{\rm BDM}_p(K)$ and
$\bCP^{\rm RT}_p(K)$).
Moreover, $\bcurlg\, w^{hp} \in \bH(\divg,\G)$,
because $\divg(\bcurlg\, w^{hp}) \equiv 0$ on $\G$.
Therefore, $\bcurlg\, w^{hp} \in \bX_{hp}$, and the proof is finished.

\subsection{Proof of Lemma~\ref{lm_approx_v}} \label{sec_proof_v}

Let $\bv = (v_1,v_2)$ be the second term in decomposition (\ref{dec_sing}) of the
singular vector field $\bu_{\rm sing}$. Again, using the results
in \cite{BespalovH_08_hpB,Bespalov_09_NPA}, we find continuous piecewise polynomial
approximations to the scalar components $v_1,\ v_2$ of $\bv$:
for any $h > 0$ and every $p \ge 1$,
there exist $v_1^{hp},\,v_2^{hp} \in S_{hp}(\G)$ such that
for $i = 1,2$ there holds
\be \label{v_approx1}
    \|v_i - v_{i}^{hp}\|_{H^{1/2}(\G)} \le
    \cases{
           C\,f_2(h,p)
           & \hbox{if \ $p \ge \alpha_2 - \frac 12$},\cr
           \noalign{\vskip3pt}
           C\,h^{p+1/2}
           & \hbox{if \ $1 \le p < \alpha_2 - \frac 12$}\cr
          }
\ee
with positive constants $C > 0$ independent of $h$ and $p$.

Let $\bv^{hp} = (v_1^{hp},\,v_2^{hp})$. Then $\bv^{hp} \in \bH(\divg,\G)$.
Furthermore, in the case of BDM-elements we observe that
for any $\G_j$ there holds
\[
  \CM_j^{-1}(\bv^{hp}|_{\G_j}) =
  J_j B_j^{-1} (\bv^{hp}|_{\G_j}) \circ T_j \in
  (\CP_{p}(K))^2 = \bCP^{\rm BDM}_p(K).
\]
Therefore, $\bv^{hp} \in \bX_{hp}$ in this case.
Moreover, since $\bv \in \bH^{1/2}(\G)$ and $v^{hp}_i \in S_{hp}(\G)$,
we use estimate (\ref{v_approx1}) and the continuity of the operator
$\divg: \bH^{1/2}(\G) \rightarrow H^{-1/2}(\G)$ to obtain
\bea
    \|\bv - \bv^{hp}\|_{\bX}
    & = &
    \|\bv - \bv^{hp}\|_{\bH^{-1/2}_{\|}(\G)} +
    \|\divg (\bv - \bv^{hp})\|_{H^{-1/2}(\G)}
    \nonumber
    \\
    & \le &
    C \|\bv - \bv^{hp}\|_{\bH^{1/2}(\G)} \le
    C \sum\limits_{i=1}^{2} \|v_i - v_i^{hp}\|_{H^{1/2}(\G)}
    \nonumber
    \\
    & \le &
    \cases{
           C\,f_2(h,p)
           & \hbox{if \ $p \ge \alpha_2 - \frac 12$},\cr
           \noalign{\vskip3pt}
           C\,h^{p+1/2}
           & \hbox{if \ $1 \le p < \alpha_2 - \frac 12$}.\cr
          }
    \label{v_approx2}
\eea

Unfortunately, in the case of RT-elements this component-wise approximation
of $\bv$ does not work since the dimension of the RT-space on the reference triangle
is smaller than the dimension of the BDM-space, and, in general,
$\CM_j^{-1}(\bv^{hp}|_{\G_j}) \notin \bCP^{\rm RT}_p(K)$, so that
$\bv^{hp} = (v_1^{hp},\,v_2^{hp}) \notin \bX_{hp}$.
In this case we follow the procedure described in \cite{BespalovH_08_hpB} (for the scalar case).
More precisely, we use appropriate
$h$-scaled cut-off functions and represent the vector field $\bv$ as the sum of
a singular vector field $\bphi$ with small support in the vicinity of the edges
and a sufficiently smooth vector field $\bpsi$.
In the rest of this subsection we will demonstrate
how this procedure works in the vector case. We will give a concise step-by-step
outline of the procedure referring frequently to \cite{BespalovH_08_hpB}
for particular error estimates and other technical details.

{\bf Step 1: decomposition of $\bv$.}
Let $h_0 = (\sigma_1 \sigma_2)^{-1} h$ with $\sigma_1,\,\sigma_2$ from
(\ref{mesh}). Using the scaled cut-off functions
$\chi^e_2(x_{e2}/h_0)$ and $\chi^v(r_v/h_0)$ with $\chi^e_2$ and $\chi^v$
from (\ref{ue}) and (\ref{uv}), respectively, one can decompose $\bv$
as follows (cf. \cite[eqs. (5.4), (5.21), (6.4)]{BespalovH_08_hpB})
\be \label{v_approx3}
    \bv = \bphi + \bpsi,
\ee
where $\supp\,\bphi \subset \cup_{v \in V} \cup_{e \in E(v)} (\bar A_e \cup \bar A_v)$,
$\bphi \in \bH^{1/2}(\G)$, $\bpsi \in \bX^m$ (see (\ref{reg}) for the notation), and
$\bpsi$ vanishes in small ($h$-dependent) neighbourhoods
of each vertex and each edge of~$\G$. Here, $A_e$ is the union of elements at one edge
$e$, i.e., $\bar A_{e} := \cup\{\bar\G_j;\; \bar\G_j \cap e \not= \mbox{\o}\}$
(note that the endpoints of $e$ are not included in $e$),
$A_v$ is the union of elements at a vertex $v$, i.e.,
$\bar A_{v} := \cup\{\bar\G_j;\; v \in \bar\G_j\}$,
$m$ is sufficiently large and depends
on the parameters $t_2$ and $m_2$ specified for the singularities $\bu^v$
and $\bu_2^{ev}$, respectively.

{\bf Step 2: approximation of $\bphi$ for $p = 1$.}
If $p = 1$, then one can approximate $\bphi$ by zero.
One has $\bphi_{hp} \equiv \bzero \in \bX_{hp}$, and, recalling the continuity
of the operator $\divg$, we derive
\be \label{v_approx4}
    \|\bphi - \bphi_{hp}\|_{\bX} =
    \|\bphi\|_{\bH^{-1/2}_{\|}(\G)} + \|\divg\,\bphi\|_{H^{-1/2}(\G)} \le
    C \|\bphi\|_{\bH^{1/2}(\G)}.
\ee
We will obtain $h$-estimates for the norms of $\bphi$ in $\bH^s(\G)$
with $s=0$ and $s = \frac 12 + \eps$ (for sufficiently small $\eps > 0$)
by using the fact that $\bphi$ has a small support.
First, we apply Lemma~3.1 of \cite{BespalovH_05_pBE} and Lemma~3.5 of \cite{BespalovH_08_hpB}
to localise the norm $\|\bphi\|_{\bH^{1/2+\eps(\G)}}$ to the faces $\G^f \subset \G$
and to the elements $\G_j \subset \G^f$, respectively. Then, we use scaling
on each element $\G_j \subset \supp\,\bphi$
(cf. \cite[Lemma~3.1, eqs. (5.6), (5.11), (5.23), (6.7)]{BespalovH_08_hpB}).
As a result we have
\beas
     \|\bphi\|^2_{\bH^s(\G)}
     & \le &
     C \sum\limits_{f:\, \G^f \subset \G} \|\bphi\|^2_{\bH^s(\G^f)}
     \\
     & \le &
     C \sum\limits_{f:\, \G^f \subset \G}\
       \sum\limits_{j:\, \G_j \subset \G^f}
       \bigg(h^{-2s} \|\bphi\|^2_{\bL^2(\G_j)} + |\bphi|^2_{\bH^s(\G_j)}\bigg)
     \\[3pt]
     & \le &
     C h^{2(\alpha_2 + 1/2 - s)} (1+\log(1/h))^{2\tilde\beta_2}\qquad
     \hbox{for $s \in \{0, 1/2 + \eps\}$}.
\eeas
Here, $\alpha_2$ and $\tilde\beta_2$ are defined  by (\ref{alpha_j})
and (\ref{beta_j}), respectively.
Hence, using the interpolation between $\bH^0(\G)$ and $\bH^{1/2+\eps}(\G)$,
we obtain by (\ref{v_approx4})
\be \label{v_approx5}
    \|\bphi - \bphi_{hp}\|_{\bX} \le
     C h^{\alpha_2} (1+\log(1/h))^{\tilde\beta_2}.
\ee

{\bf Step 3: approximation of $\bphi$ for $p \ge 2$.}
We approximate each component $\varphi_i$ of $\bphi$ by a piecewise
polynomial $\varphi_i^{hp} \in S_{h,p-1}(\G)$ ($i = 1,2$).
Here we can use the results of \cite{BespalovH_08_hpB,Bespalov_09_NPA} for
each type of singularity: there exist $\varphi_i^{hp} \in S_{h,p-1}(\G)$ such that
for $i = 1,2$ there holds (cf. \cite[eqs. (5.12), (5.13), (5.24), (6.9)]{BespalovH_08_hpB})
\be \label{v_approx6}
    \|\varphi_i - \varphi_{i}^{hp}\|_{H^{1/2}(\G)} \le
    C\,h^{\alpha_2}\,(p-1)^{-2\alpha_2}\,(1+\log\hbox{$\frac{p-1}{h}$})^{\tilde\beta_2} \le
    C\,h^{\alpha_2}\,p^{-2\alpha_2}\,(1+\log\hbox{$\frac{p}{h}$})^{\tilde\beta_2},
\ee
where $\alpha_2$ and $\tilde\beta_2$ are the same as in (\ref{v_approx5}).

Setting $\bphi^{hp} = (\varphi_1^{hp},\varphi_2^{hp})$ we observe that
$\bphi^{hp} \in \bH(\divg,\G)$, and for any element $\G_j$ one has
\[
  \CM_j^{-1}(\bphi^{hp}|_{\G_j}) =
  J_j B_j^{-1} (\bphi^{hp}|_{\G_j}) \circ T_j \in
  (\CP_{p-1}(K))^2 \subset \bCP^{\rm RT}_p(K).
\]
Hence, $\bphi^{hp} \in \bX_{hp}$. Moreover, since $\bphi \in \bH^{1/2}(\G)$
and $\varphi_i^{hp} \in S_{h,p-1}(\G)$ for $i=1,2$,
we estimate by analogy with (\ref{v_approx4}) and using (\ref{v_approx6}):
\bea \label{v_approx7}
     \|\bphi - \bphi_{hp}\|_{\bX}
     & \le &
     C \|\bphi - \bphi_{hp}\|_{\bH^{1/2}(\G)}
     \nonumber
     \\
     & \le &
     C \sum\limits_{i=1}^{2} \|\varphi_i - \varphi_i^{hp}\|_{H^{1/2}(\G)} \le
     C\,h^{\alpha_2}\,p^{-2\alpha_2}\,(1+\log\hbox{$\frac{p}{h}$})^{\tilde\beta_2}.
\eea
Comparing (\ref{v_approx5}) and (\ref{v_approx7}) we observe that
one can use estimate (\ref{v_approx7}) also in the case $p = 1$.

{\bf Step 4: approximation of $\bpsi$.}
Recalling that $\bpsi \in \bX^m$ we apply Proposition~\ref{prop_approx_reg}:
there exists $\bpsi^{hp} \in \bX_{hp}$ such that
\be \label{v_approx8}
    \|\bpsi - \bpsi^{hp}\|_{\bX} \le
    C\,h^{\min\{k,p\}+1/2}\,p^{-(k+1/2)}\,\|\bpsi\|_{\bX^k},\qquad
    0 \le k \le m.
\ee
We also recall that $\bpsi$ vanishes in the $h$-neighbourhood
of all edges and vertices of $\G$. Therefore, having explicit expressions
of the singular vector field $\bv$ (and thus, of $\bpsi$) on each face $\G^f \subset \G$,
one can derive upper bounds for the norms $\|\bpsi\|_{\bH^k(\G^f)}$ and
$\|\div_{\G^f}\bpsi\|_{H^k(\G^f)}$ in terms of the mesh size $h$, the parameter $k$,
and the singularity exponents.
For the norm of $\bpsi$ this can be done component-wise using the same calculations
as in \cite{BespalovH_08_hpB} (see, e.g., inequalities (5.15) and (6.10) therein):
\be \label{v_approx9}
    \|\bpsi\|_{\bH^k(\G^f)} \le
    C\,h^{\alpha_2 + 1/2 - k}\,(\log(1/h))^{\tilde\beta_2+\tilde\nu_2},\qquad
    \alpha_2 + \hbox{$\frac 12$} \le k \le m,
\ee
where $\alpha_2$ and $\tilde\beta_2$ are defined by (\ref{alpha_j}) and (\ref{beta_j}),
respectively, $\tilde\nu_2 = \frac 12$ if $k = \alpha_2 + \frac 12$ and
$\tilde\nu_2 = 0$ otherwise.

To estimate the norm $\|\div_{\G^f}\bpsi\|_{H^k(\G^f)}$ we observe that the operator
$\div_{\G^f}$ reduces all singularity exponents by one (while preserving the structure
of the corresponding singularity). Then, using similar calculations as indicated above,
we obtain
\be \label{v_approx10}
    \|\div_{\G^f}\,\bpsi\|_{H^k(\G^f)} \le
    C\,h^{\alpha_2 - 1/2 - k}\,(\log(1/h))^{\tilde\beta_2+\bar\nu_2},\qquad
    \alpha_2 - \hbox{$\frac 12$} \le k \le m,
\ee
where $\alpha_2,\,\tilde\beta_2$ are the same as in (\ref{v_approx9}), whereas
$\bar\nu_2 = \frac 12$ if $k = \alpha_2 - \frac 12$ and
$\bar\nu_2 = 0$ otherwise.

By (\ref{v_approx8})-(\ref{v_approx10}) we conclude that
\be \label{v_approx11}
    \|\bpsi - \bpsi^{hp}\|_{\bX} \le
    C\,h^{\min\{k,p\}+\alpha_2-k}\,p^{-(k+1/2)}\,(\log(1/h))^{\tilde\beta_2+\bar\nu_2},\qquad
    \alpha_2 - \hbox{$\frac 12$} \le k \le m
\ee
with the same $\alpha_2$, $\tilde\beta_2$, and $\bar\nu_2$ as in (\ref{v_approx10}).

Let $p > 2\alpha_2 - \frac 12$. Since $m$ is large enough, we can select an integer $k$ satisfying
\[
  2\alpha_2 - \hbox{$\frac 12$} < k \le \min\,\{m,p\}.
\]
Then $\min\,\{k,p\} = k$, and $p^{-(k+1/2)} \le p^{-2\alpha_2}$.

If $\alpha_2 - \frac 12 < p \le 2\alpha_2 - \frac 12$ (i.e., $p$ is bounded),
we choose an integer $k \in (\alpha_2 - \frac 12,p]$,
and if $p = \alpha_2 - \frac 12$, then we take $k = p = \alpha_2 - \frac 12$.
In both these cases $\min\,\{k,p\} = k$, and $p^{-(k+1/2)} \le C\,p^{-2\alpha_2}$.

Thus, for any $p \ge \alpha_2 - \frac 12$, selecting $k$ as indicated above
we find by (\ref{v_approx11})
\be \label{v_approx12}
    \|\bpsi - \bpsi^{hp}\|_{\bX} \le
    C\,h^{\alpha_2}\,p^{-2\alpha_2}\,(\log(1/h))^{\tilde\beta_2+\nu_2}
\ee
with $\alpha_2$, $\tilde\beta_2$, and $\nu_2$ being defined by
(\ref{alpha_j}), (\ref{beta_j}), and (\ref{nu_j}), respectively.

{\bf Step 5: approximation of $\bv = \bphi + \bpsi$.}
Let us define $\bv^{hp} := \bphi^{hp} + \bpsi^{hp} \in \bX_{hp}$, where
$\bphi^{hp}$ and $\bpsi^{hp}$ are approximations constructed above (see Steps~2--4).
Then combining estimates (\ref{v_approx7}) and (\ref{v_approx12}) and
using the triangle inequality we prove (\ref{approx_v}) in the case
$p \ge \alpha_2 - \frac 12$.

It remains to consider the case $1 \le p < \alpha_2 - \frac 12$.
In this case one does not need decomposition (\ref{v_approx3}).
Observe that for each face $\G^f$ one has $\div_{\G^f}\,\bv \in H^k(\G^f)$
with $1 \le k < \alpha -\frac 12$. Therefore, $\bv \in \bX^k$
with $1 \le k < \alpha -\frac 12$, and applying Proposition~\ref{prop_approx_reg}
we find $\bv^{hp} \in \bX_{hp}$ satisfying
\[
  \|\bv - \bv^{hp}\|_{\bX} \le
  C\,h^{\min\,\{k,p\}+1/2}\,\|\bv\|_{\bX^k}.
\]
Hence, selecting $k \in [p, \alpha - \frac 12)$ we arrive at the desired
upper bound in (\ref{approx_v}), and the proof is finished.



\end{document}